\documentclass[final]{siamltex}

%\documentclass[review,onefignum,onetabnum]{siamonline250211}
%\documentclass[review,onefignum,onetabnum]{siamonline171218}

%\documentclass[11pt]{article} 

%\documentclass[moor]{informs1}              % for a regular run
%\documentclass[moor,nonblindrev]{informs1} % for review, not blinded
%\documentclass[moor,blindrev]{informs1}    % for review, blinded
%\documentclass[moor,copyedit]{informs1}    % spaced for copyediting

% If hyperref is used, dvi-to-ps driver of choice must be declared as
%   an additional option to the \documentstyle. For example
%\documentclass[dvips,moor]{informs1}      % if dvips is used 
%\documentclass[dvipsone,moor]{informs1}   % if dvipsone is used, etc. 

% Private macros here (check that there is no clash with the style)

% Natbib setup for numeric style

\usepackage{amssymb,latexsym,amsmath}
\usepackage{epsfig}
\usepackage{caption}
 
 \usepackage{amsmath}
\usepackage{amsfonts}
\usepackage{array}
\usepackage{dsfont}
\usepackage{amssymb}

\usepackage{algorithm}
\usepackage{algorithmic}

%\usepackage{natbib}
%\usepackage[numbers]{natbib}
% \NatBibNumeric
 %\def\bibfont{\small}%
 %\def\bibsep{\smallskipamount}%
% \def\bibhang{24pt}%
% \def\BIBand{and}%
% \def\newblock{\ }%
% \bibpunct[, ]{[}{]}{,}{n}{}{,}%

\newtheorem{assumption}{Assumption}[section]

 \newtheorem{remark}{Remark}[section]
%  \newtheorem{assumption}{Assumption}
  
%\theoremstyle{definition}
%\newtheorem{example}{Example}[section]

%

%

%% Hyperref setup
\usepackage[colorlinks=true,breaklinks=true,bookmarks=true,urlcolor=blue,
     citecolor=blue,linkcolor=blue,bookmarksopen=false,draft=false]{hyperref}

%\def\EMAIL#1{\href{mailto:#1}{#1}}% When hyperref is used, otherwise outcomment 
%\def\URL#1{\href{#1}{#1}}         % When hyperref is used, otherwise outcomment 

%% Setup of theorem styles. Outcomment only one. 
%% Preferred default is the first option.
%\TheoremsNumberedThrough     % Preferred (Theorem 1, Lemma 1, Theorem 2)
%\TheoremsNumberedByChapter  % (Theorem 1.1, Lema 1.1, Theorem 1.2)

%% Setup of the equation numbering system. Outcomment only one.
%% Preferred default is the first option.
%\EquationsNumberedThrough    % Default: (1), (2), ...
%\EquationsNumberedBySection % (1.1), (1.2), ...

% In the reviewing and copyediting stage enter the manuscript number.
%\MANUSCRIPTNO{} % When the article is logged in and DOI assigned to it,
                 %   this manuscript number is no longer necessary

%%%%%%%%%%%%%%%%

%\def\R{\mathbb{R}}

\def\Pr{\mathop{\rm Pr}}

\usepackage[utf8]{inputenc}

\usepackage{titlesec}
\titleformat{\chapter}[display]{\Large\bfseries\centering}%
    {\chaptername~\thechapter}{1ex}{}[\titlerule]

\begin{document}

\sloppy

\title{Quantizer Design for Finite Model Approximations, Model Learning, and Quantized Q-Learning for MDPs with Unbounded Spaces \thanks{
This research was supported in part by
the Natural Sciences and Engineering Research Council (NSERC) of Canada.}
}

\author{Osman Bi\c{c}er, Ali D. Kara and Serdar Y\"uksel
\thanks{A.D. Kara is with the Department of Mathematics, Florida State University. Osman Bi\c{c}er and S. Y\"uksel are with the Department of Mathematics and Statistics,
     Queen's University, Kingston, ON, Canada}
     }

\maketitle

\begin{abstract} In this paper, for Markov decision processes (MDPs) with unbounded state spaces we present refined upper bounds presented in [Kara et. al. JMLR'23] on finite model approximation errors via optimizing the quantizers used for finite model approximations. We also consider implications on quantizer design for quantized Q-learning and empirical model learning, and the performance of policies obtained via Q-learning where the quantized state is treated as the state itself. We highlight the distinctions between planning, where approximating MDPs can be independently designed, and learning (either via Q-learning or empirical model learning), where approximating MDPs are restricted to be defined by invariant measures of Markov chains under exploration policies, leading to significant subtleties on quantizer design performance, even though asymptotic near optimality can be established under both setups. In particular, under Lyapunov growth conditions, we obtain explicit upper bounds which decay to zero as the number of bins approaches infinity. \end{abstract}

\begin{keywords} 
Reinforcement learning, Quantizer design, MDP
\end{keywords} 

\begin{AMS} 60J25, 60J60, 60J05   \end{AMS}

\section{Introduction}
It has been recently shown that one can obtain finite approximations via state and action quantization for Markov decision processes (MDPs) with uncountable Polish spaces, as well as run both empirical model learning and Q-learning to arrive at near optimal solutions (see.e.g. \cite{SaLiYuSpringer,SaYuLi15c}). Such studies have primarily focused on the case with compact spaces and uniform quantization and with only asymptotic convergence results for the case with non-compact spaces. To this end, the goal of this paper is to design quantizers in such a finite model approximation and learning framework for continuous space MDPs. %Notably, this paper develops a mathematical framework and formulation paving the way for non-uniform quantizer design and associated performance bounds, which are then also applicable to MDPs with non-compact spaces. 

\subsection{Related Literature}
For stochastic control problems with continuous state and action spaces, approximations are inevitable for computational methods.  A common approach in reinforcement learning is function approximation, where the value function (Q-function) of the control problem is approximated using a parametrized family of functions. Convergence of {\it policy evaluation} methods are known under linear function approximation where the parametrized family of functions are formed by the span of finitely many basis functions \cite{tsitsiklis1997analysis}. However, learning  optimal Q-functions with linear function approximation is known to be unstable in general \cite{baird1995residual} except in special cases. For general linear function approximation, \cite{meyn2024projected} has shown that under a certain class of exploration policies, optimal Q-value learning  with linear function approximation remains bounded. The special cases where the convergence can be guaranteed include when (i) the exploration policy is already close to the greedy policy of the learning iterations \cite{melo2008analysis}, and (ii) the stage-wise cost function, the transition model, and thus the optimal Q-functions are perfectly represented by the basis functions, i.e. they belong to the span of the basis functions \cite{jin2023provably, Ruszczy2024}. These assumptions, however, are restrictive in general, since it is unrealistic to assume near-optimal exploration or perfect linear representability.  

A particularly powerful, though computationally demanding, special case is when the basis functions are indicator functions of  quantization bins, which form an orthonormal basis, and the convergence analysis can mathematically be justified. In particular, \cite{kara2023qlearning} showed that the Q-values learned under quantization based learning correspond to an auxiliary finite control problem, which is a finite approximation of the original control problem with a particular weight measure on the quantization bins. This observation has an important consequence that the model-based approaches using state space quantization and the quantized Q-learning coincide and can be used interchangeably, even under mild continuity conditions on transition kernels \cite{SaYuLi15c}. 

For quantization based learning methods while often convergence is studied,  error analysis regarding the limit of the stochastic iterates is typically not studied in general or an error analysis is not provided at all. Early studies in this direction include \cite{singh1995reinforcement,gaskett1999q}. \cite{CsabaSmart}  generalized these by the use of Q function interpolators that are sufficiently regular (defined by non-expansiveness) in their parametric representation and established both convergence and optimality properties. Error analysis of the learned value functions with respect to the true value functions under quantization methods is studied in \cite{NNQlearning}, where the authors established finite-sample guarantees for quantized Q-learning with nearest-neighbor mappings, assuming transition models admit continuous densities with respect to the Lebesgue measure.  

In \cite{SaYuLi15c,kara2023qlearning,kara2024qlearning}, it was shown that the approximate value function, as well as the performance of the resulting policies, are nearly optimal under weakly continuous (or weak Feller) models, arguably the weakest assumption ensuring existence of optimal solutions and consistency of aproximations.  

Another direction of research with provable loss guarantees is kernel-based methods \cite{ormoneit2002kernel,ormoneit2002average,barreto2016practical,domingues2021kernel,zhou2024robustness,DuPr14} where the cost and the transition models are estimated via empirical kernel regression or otherwise simulations are used to update the Bellman optimality equations.  One can then find control policies based on the learned models using model-based approaches. 

Most prior work focuses on approximation of control problems with compact spaces. There is a limited number of provable guarantees for the approximation of control problems with unbounded state spaces. In \cite{SaYuLi15c}  {\it asymptotic} near optimality of the quantization based approximations was established for non-compact  state spaces as the quantization rate grows to infinity. However, there is no convergence rate guarantees for this approach in terms of  expected quantization error on the state space and learning was not studied in this context. 

Moreover, previous work generally assumes a fixed quantization scheme, without focusing on the design of the quantizers. In this design direction, \cite{sinclair2023adaptive} studied adaptive quantization methods for compact spaces under Wasserstein continuous transitions, proposing progressively refined partitions based on `relevance' of the partition sets. The resulting quantization scheme is  not optimal in general, as refinement is localized rather than globally optimized.

\subsection{Model and Cost Criteria}

Let $\mathds{X} \subset \mathds{R}^n$ be a Borel set in which the elements of a controlled Markov chain $\{X_t,\, t \in \mathbb{Z}_+\}$ take values for some $n<\infty$.  Here and throughout the paper, $\mathbb{Z}_+$ denotes the set of non-negative integers and $\mathds{N}$ denotes the set of positive integers. Let $\mathds{U}$, the action space, be a compact Borel subset of some Euclidean space, from which the sequence of control action variables $\{U_t,\, t \in \mathbb{Z}_+\}$ take values. 

The $\{U_t, \, t \in \mathbb{Z}_+\}$, are generated via admissible control policies: An {\em admissible policy} $\gamma$ is a sequence of control functions $\{\gamma_t,\, t\in \mathbb{Z}_+\}$ such that $\gamma_t$ is measurable on the $\sigma$-algebra generated by the information variables
\[
I_t=\{X_0,\ldots,X_t,U_0,\ldots,U_{t-1}\}, \quad t \in \mathds{N}, \quad
  \quad I_0=\{X_0\},
\]
where
\begin{equation}
\label{eq_control}
U_t=\gamma_t(I_t),\quad t\in \mathbb{Z}_+,
\end{equation}
are the $\mathds{U}$-valued control
actions.
%\[X_{[0,t]} = \{X_s,\, 0 \leq s \leq t \}, \quad U_{[0,t-1]} =
  %\{U_s, \, 0 \leq s \leq t-1 \}.\]
\noindent We define $\Gamma$ to be the set of all such admissible policies.

The joint distribution of the state and control
processes is then completely determined by (\ref{eq_control}), the initial probability measure of $X_0$, and the following
relationship:
\begin{align}\label{eq_evol}
& \Pr\biggl( X_t\in B \, \bigg|\, (X,U)_{[0,t-1]}=(x,u)_{[0,t-1]} \biggr) = \int_B \mathcal{T}( dx_t|x_{t-1}, u_{t-1}),  B\in \mathcal{B}(\mathds{X}), t\in \mathds{N},
\end{align}
where $\mathcal{T}(\cdot|x,u)$ is a stochastic kernel  (that is, a regular conditional probability measure) from $\mathds{X}\times \mathds{U}$ to $\mathds{X}$, $\mathcal{B}(\mathds{X})$ is the Borel $\sigma$-algebra of $\mathds{X}$, and $(X,U)_{[0,t-1]}$ is the set of state-action pairs up until $t-1$. We will be interested in the following performance criteria: The first one is the infinite-horizon discounted expected cost
  \begin{align}
    J_{\beta}(x_0,\gamma)= E_{x_0}^{{\mathcal{T}},\gamma}\left[\sum_{t=0}^{\infty} \beta^tc(X_t,U_t)\right]
  \end{align}
  where $0<\beta<1$ is the discount factor, $c:\mathds{X}\times\mathds{U}\to\mathds{R}$ is the stage-wise continuous and bounded cost function, and $E_{x_0}^{{\mathcal{T}},\gamma}$ denotes the expectation with initial state $x_0$ and transition kernel $\mathcal{T}$ under policy $\gamma$. Furthermore, for any initial state $X_0=x_0$, the optimal value function is defined by
\begin{align*}
  J_{\beta}^*(x_0)&=\inf_{\gamma\in\Gamma} J_{\beta}(x_0,\gamma).
\end{align*}
The second objective is the infinite horizon average cost criterion
\begin{eqnarray}\label{AverageCostProblemDef}
J_{avg}^*(x):= \inf_{\gamma} J_{avg}(x,\gamma) = \inf_{\gamma \in \Gamma} \limsup_{T \to \infty} {1 \over T} E^{\gamma}_x [\sum_{t=0}^{T-1} c(x_t,u_t)].
\end{eqnarray} 

%To calculate the optimal value function and the optimal control policy, various numerical approaches can be adopted, e.g., value iteration, policy iteration, linear programming under the assumption that the transition probability $\mathcal{T}$ and the cost function $c$ are known. If the model is unknown, a powerful and popular tool is the Q-learning algorithm by \cite{Watkins}. The Q-learning algorithm provides an iterative approach that is guaranteed to converge under mild assumptions if the model is finite and if the controller has access to the state and cost realizations.

\subsection{Contributions}

In this paper, we make the following contributions: {\bf (i)} In Theorem \ref{theorem:refined_error_bound}, we refine and computationally improve the upper bounds given in \cite{kara2023qlearning} which involve admissible policies to ones that only involve stationary policies. This facilitates an analysis involving occupation measures as well as invariant measures for discounted and ergodic cost criteria, respectively; which is then utilized later in the paper.  {\bf (ii)} In Theorems \ref{theorem:optimized_error_bound} \ref{theorem:error_bound_average_alt}, we derive bounds in terms of occupation measures, to represent the loss in terms of occupation and invariant measures, respectively for the discounted and average cost criteria. In  Corollary \ref{median_bound_disc}, we optimize the quantization design by choosing the representative points as medians (in the $\ell_1$ sense) with respect to the occupation measures and arrive at an explicit error bound. We thus provide a convergence rate analysis of quantization based approximations which is convenient for stochastic analysis and which is also applicable for {\it non-compact} state spaces. {\bf (iii)} In Theorem \ref{theorem:uniform_quantizer_lyapunov}, under Foster-Lyapunov conditions, we derive explicit error bounds which decay to zero as the quantization gets finer and show better performance than uniform quantization in MDPs with non-compact state spaces under discounted cost criteria. We extend our analysis to the average cost criterion by obtaining explicit error bounds  in Theorem \ref{theorem:uniform_quantizer_lyapunov_average}.  {\bf (iv)} In Theorem \ref{theorem:lyapunov_learning}, we extend our analysis on quantizer design to quantized Q-learning and empirical model learning (which are equivalent in performance). We obtain the error bounds which depend, unlike the planning problem above, on the invariant measure induced by the exploration policy in Theorem \ref{theorem:error_quantized_q_learning}. For non-compact spaces, in Theorem \ref{theorem:lyapunov_learning}, we show that the approximation error diminishes as quantization becomes finer under Foster–Lyapunov conditions, despite constraints on the dependence of the weighting measures on the exploration policy and quantization bins, and we derive explicit error bounds.

\section{Refined Error Bounds on Finite Model Approximations}

\subsection{Approximate Model Construction}\label{finiteModelAppC}

We start with the the approach introduced in \cite{saldi2018finite,saldi2017optimality}, where we partition the state space $\mathbb{X}$ into $M$ disjoint subsets $\{B_i\}_{i=1}^M$, such that $\bigcup_{i=1}^M B_i = \mathbb{X}$ and $B_i \cap B_j = \emptyset$ for $i \neq j$. For each subset $B_i$, we select a representative state $y_i \in B_i$. The finite set $\mathbb{Y} = \{ y_1, y_2, \dots, y_M \}$ serves as the quantized state space. The quantizer mapping $q: \mathbb{X} \to \mathbb{Y}$ is defined by
\begin{equation*}
q(x) = y_i \quad \text{if } x \in B_i.
\end{equation*}

We introduce a probability measure $\pi \in \mathcal{P}(\mathbb{X})$ over $\mathbb{X}$, ensuring that $\pi(B_i) > 0$ for each $B_i$. This measure allows us to define normalized measures for each quantization bin $B_i$:
\begin{equation*}
\hat{\pi}_{y_i}(A) = \frac{\pi(A)}{\pi(B_i)}, \quad \forall A \subseteq B_i, \quad \forall i \in \{1, \ldots, M\}.
\end{equation*}
Using these normalized measures, we define the stage-wise cost and transition kernel for the finite-state MDP:
\begin{equation*}
C^*(y_i, u) = \int_{B_i} c(x, u) \hat{\pi}_{y_i}(dx),
\end{equation*}
\begin{equation*}
P^*(y_j | y_i, u) = \int_{B_i} \mathcal{T}(B_j | x, u) \hat{\pi}_{y_i}(dx).
\end{equation*}

The finite-state value function $\hat{J}_{\beta}: \mathds{Y} \rightarrow \mathbb{R}$ satisfies the dynamic programming equation:
\begin{equation*}
\hat{J}_{\beta}(y) = \inf_{u \in \mathbb{U}} \left\{ C^*(y, u) + \beta \sum_{z \in \mathbb{Y}} \hat{J}_{\beta}(z) P^*(z | y, u) \right\}.
\end{equation*}
We extend $\hat{J}_{\beta}$ to $\mathbb{X}$ by setting $\hat{J}_{\beta}(x) = \hat{J}_{\beta}(q(x))$ for all $x \in \mathbb{X}$.

Under certain regularity conditions, we will see that the quantization error can be efficiently bounded by the loss function $L: \mathbb{X} \rightarrow \mathbb{R}$:
\begin{equation}\label{eq:loss_function}
L(x) = \int_{B_i} \|x - x'\|_1 \hat{\pi}_{y_i}(dx'), \quad \forall x \in B_i
\end{equation}
where $\|\cdot\|_1$ denotes the $\ell_1$ norm in $\mathds{R}^n$. 

\begin{remark}
We note that the loss function (\ref{eq:loss_function}) above is also often called the potential function of the measure $\hat{\pi}_{y_i}$ evaluated at $x$, in the mathematical statistics and probability theory literatures, see e.g. \cite{beiglbock2022approximation}.
\end{remark}

\subsection{Refined Error Bounds for the Discounted Cost Criterion}
\label{section:refined_discounted}
To derive error bounds, we make the following assumptions:

\begin{assumption} \label{assumption:model}
    The MDP $(\mathds{X},\mathds{U},\mathcal{T},c)$ satisfies the following:
    \begin{itemize}
        \item[(i)] The action space $\mathds{U}$ is compact. 
        \item[(ii)] The stage-wise cost function $c$ is nonnegative, bounded and continuous on both $\mathds{X}$ and $\mathds{U}$.
\item[(iii)] The kernel ${\cal T}$ is weak Feller, that is, for every $g \in C_b(\mathds{X})$ (continuous and bounded function), the map
\[\mathds{X} \times \mathds{U} \ni (x,u) \mapsto \int g(x_1){\cal T}(dx_1 | x_0=x,u_0=u) \in \mathbb{R} \]
is continuous.
    \end{itemize}
\end{assumption}

The above ensures that optimal policies exist. Furthermore, by \cite[Lemma 3.19]{SaLiYuSpringer} and \cite[Theorem 3.16]{SaLiYuSpringer} (see also \cite{saldi2014near}), any MDP with a weakly continuous transition probability kernel can be approximated by an MDP with finite action spaces. Accordingly, in the sequel, we assume that $\mathds{U}$ is finite.

%\begin{remark}
%The boundedness assumption  on the stage-wise cost function $c(\cdot,\cdot)$ can be relaxed. In particular, the results in this paper only require  boundedness of the value function of the approximate model, i.e. $\|\hat{J}_\beta\|_\infty < \infty$. This boundedness can be ensured if the approximate stage-wise cost function $C^*(y_i,u)$ is bounded for each $y_i$ and $u$. %Since the true cost $c(x,u)$ is continuous, it remains bounded over compact sets.  For the overflow bins, i.e. those which are not bounded, one can choose the weighting measure $\hat{\pi}(\cdot)$ to assign full measure to a bounded set, which is then sufficient to guarantee the boundedness of the approximate value function $\hat{J}_\beta$. %However, as we will see later in the paper, for models learned via Q-learning, we do not have control over the choice of the weighting measure. 
%%To streamline the analysis we assume that the original stage-wise cost $c$ is bounded, although it is possible to relax this assumption. 
%\end{remark}

\begin{assumption}\label{assumption:lipschitz_cost_and_transitions} 
The transition kernel and the stage-wise cost function satisfies the following:
\begin{itemize}
    \item[(i)] $c(x, u)$ is Lipschitz continuous in $x$. There exists a constant $\alpha_c > 0$ such that
\[
|c(x, u) - c(x', u)| \leq \alpha_c \|x - x'\|_1, \quad \forall x, x' \in \mathds{X}, \forall u \in \mathds{U}.
\]
\item[(ii)] $\mathcal{T}(\cdot  | x, u)$ is Lipschitz continuous in $x$ under the total variation distance. There exists a constant $\alpha_T > 0$ such that
\[
\|\mathcal{T}(\cdot | x, u) - \mathcal{T}(\cdot | x', u)\|_{\text{TV}} \leq \alpha_T \|x - x'\|_1, \quad \forall x, x' \in \mathds{X}, \forall u \in \mathds{U}.
\]
\end{itemize}
\end{assumption}
Under these assumptions, we first recall the following theorem:

\begin{theorem}[Kara et al., 2023, Theorem 3 \cite{kara2023qlearning}]\label{theorem:error_bound_discounted}
Under Assumptions~\ref{assumption:model} and \ref{assumption:lipschitz_cost_and_transitions}, for any initial state $x_0 \in \mathds{X}$, the error between the optimal value function $J^*_{\beta}(x_0)$ and the approximate value function $\hat{J}_{\beta}(x_0)$ satisfies:
\begin{equation*}
\left| \hat{J}_{\beta}(x_0) - J^*_{\beta}(x_0) \right| \leq \left( \alpha_c + \frac{\beta \alpha_T \|c\|_{\infty}}{1 - \beta} \right) \sum_{t=0}^{\infty} \beta^t \sup_{\gamma \in \Gamma} \mathbb{E}_{x_0}^{\gamma} [L(X_t)],
\end{equation*}
where $\Gamma$ is the set of admissible policies, and $L(X_t)$ is the loss function defined in \eqref{eq:loss_function}.
\end{theorem}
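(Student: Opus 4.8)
The plan is to carry out the entire argument on $\mathds{X}$ through the extended function $\hJ_\beta(\cdot) = \hJ_\beta(q(\cdot))$, comparing it against the fixed point of the true Bellman optimality operator. Define, for bounded measurable $f:\mathds{X}\to\mathds{R}$,
\[
(Hf)(x) = \inf_{u\in\mathds{U}}\left\{ c(x,u) + \beta\int_{\mathds{X}} f(x_1)\,\mathcal{T}(dx_1|x,u)\right\},
\]
which is a $\beta$-contraction in the supremum norm whose unique fixed point is $J^*_\beta$. The first step is to re-express the finite dynamic programming equation in terms of $H$. Substituting the definitions of $C^*$ and $P^*$ and interchanging the finite sum with the integral against $\hat{\pi}_{y_i}$, for $x\in B_i$ one obtains
\[
\hJ_\beta(x) = \inf_{u\in\mathds{U}}\int_{B_i}\left[ c(x',u) + \beta\int_{\mathds{X}}\hJ_\beta(x_1)\,\mathcal{T}(dx_1|x',u)\right]\hat{\pi}_{y_i}(dx'),
\]
so that $\hJ_\beta$ is the fixed point of a \emph{bin-averaged} version of $H$.

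The central step is to show the pointwise inequality $|(H\hJ_\beta)(x) - \hJ_\beta(x)| \le K\,L(x)$ with $K = \alpha_c + \frac{\beta\alpha_T\|c\|_\infty}{1-\beta}$. Since $\hJ_\beta$ is the value function of a finite MDP whose per-stage cost $C^*$ inherits the bound $\|c\|_\infty$, one has $\|\hJ_\beta\|_\infty \le \|c\|_\infty/(1-\beta)$. For $x,x'\in B_i$ and any fixed $u$, Assumption~\ref{assumption:lipschitz_cost_and_transitions}(i) controls $|c(x,u)-c(x',u)|\le \alpha_c\|x-x'\|_1$, while Assumption~\ref{assumption:lipschitz_cost_and_transitions}(ii) together with the sup-norm bound on $\hJ_\beta$ gives $|\int \hJ_\beta\,d\mathcal{T}(\cdot|x,u) - \int\hJ_\beta\,d\mathcal{T}(\cdot|x',u)| \le \frac{\|c\|_\infty}{1-\beta}\alpha_T\|x-x'\|_1$. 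Adding these and using $|\inf_u f(u) - \inf_u g(u)|\le \sup_u|f(u)-g(u)|$ to compare the displayed bin-averaged equation with $(H\hJ_\beta)(x)$, then averaging over $x'$ against $\hat{\pi}_{y_i}$, produces exactly $K\int_{B_i}\|x-x'\|_1\hat{\pi}_{y_i}(dx') = K\,L(x)$.

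With $g := \hJ_\beta - H\hJ_\beta$ satisfying $|g|\le K L$, I would then telescope. Because $H$ is a contraction, $H^n\hJ_\beta \to J^*_\beta$, so
\[
\hJ_\beta(x_0) - J^*_\beta(x_0) = \sum_{n=0}^{\infty}\left[(H^n\hJ_\beta)(x_0) - (H^{n+1}\hJ_\beta)(x_0)\right].
\]
Each summand equals $(H^n\hJ_\beta)(x_0) - (H^n(H\hJ_\beta))(x_0)$, i.e. $H^n$ applied to two functions differing by $g$. Iterating the one-step estimate $|(Hf_1)(x)-(Hf_2)(x)|\le \beta\sup_u\int|f_1-f_2|\,d\mathcal{T}(\cdot|x,u)$ yields the $n$-step propagation bound $|(H^n f_1)(x_0)-(H^n f_2)(x_0)|\le \beta^n\sup_{\gamma\in\Gamma}\mathbb{E}_{x_0}^{\gamma}[|(f_1-f_2)(X_n)|]$, where the supremum over admissible policies arises precisely because the minimizing actions may differ at each stage. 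Applying this with $f_1-f_2=g$ and $|g|\le K L$ bounds the $n$-th summand by $\beta^n K\sup_{\gamma}\mathbb{E}_{x_0}^{\gamma}[L(X_n)]$, and summing over $n$ gives the claim.

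The main obstacle is the second step: correctly rewriting the finite Bellman equation as a bin-averaged operator and verifying that the two Lipschitz constants combine into the single coefficient $K$ require care with the total-variation convention and with the sup-norm control of $\hJ_\beta$. The telescoping in the third step is routine contraction bookkeeping, but the propagation inequality must be stated with the supremum over admissible policies (rather than a single Markov policy) so that the state-dependent loss $L(X_n)$ is correctly transported forward.
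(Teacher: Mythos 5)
Your proposal is correct and follows essentially the same route as the paper: the key one-step estimate $|(H\hat J_\beta)(x)-\hat J_\beta(x)|\le \bigl(\alpha_c+\beta\alpha_T\|c\|_\infty/(1-\beta)\bigr)L(x)$, obtained by comparing the two Bellman equations via Assumption~\ref{assumption:lipschitz_cost_and_transitions} and the bound $\|\hat J_\beta\|_\infty\le\|c\|_\infty/(1-\beta)$, is exactly the initial inequality the paper iterates in Appendix~\ref{appendix:refined_error_bound}. Your telescoping of $\hat J_\beta-J^*_\beta$ through powers of $H$ is just a repackaging of the paper's iteration of the recursive inequality on $V(x)=|J^*_\beta(x)-\hat J_\beta(x)|$, and it correctly produces the $\sup_{\gamma\in\Gamma}$ inside the sum over $t$ (modulo the measurable-selection point the paper also only gestures at).
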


The bound in Theorem~\ref{theorem:error_bound_discounted} involves a supremum over all admissible policies $\Gamma$, which can be difficult to compute. In the following, we refine the bound by restricting the supremum to stationary policies, which will turn out to be consequential in our analysis to follow as this will allow the bounds to be computed in terms of occupation measures or invariant measures.

\begin{theorem}\label{theorem:refined_error_bound}
Under Assumptions~\ref{assumption:model} and \ref{assumption:lipschitz_cost_and_transitions}, for any initial state $x_0 \in \mathds{X}$, the error satisfies:
\begin{equation*}
\left| \hat{J}_{\beta}(x_0) - J^*_{\beta}(x_0) \right| \leq \left( \alpha_c + \frac{\beta \alpha_T \|c\|_{\infty}}{1 - \beta} \right) 
 \mathbb{E}_{x_0 }^{\gamma_s} \left[ \sum_{t=0}^{\infty} \beta^t L(X_t) \right], 
\end{equation*} 
where $\gamma_s$ is the policy that achieves the supremum for $\sup_u \int |\hat{J}_\beta(x_1)-{J}^*_\beta (x_1)|\mathcal{T}(dx_1|x,u)$.

In particular, we have that
\begin{equation}
\left| \hat{J}_{\beta}(x_0) - J^*_{\beta}(x_0) \right| \leq \left( \alpha_c + \frac{\beta \alpha_T \|c\|_{\infty}}{1 - \beta} \right) 
\sup_{\gamma_s \in \Gamma_s} \mathbb{E}_{x_0 }^{\gamma_s} \left[ \sum_{t=0}^{\infty} \beta^t L(X_t) \right], \label{eq:stationary_policy}
\end{equation} 
where $\Gamma_s$ is the set of stationary policies.
\end{theorem}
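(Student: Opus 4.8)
The plan is to establish a single pointwise error recursion driven by the stationary policy $\gamma_s$ and then unroll it, rather than bounding each time-step separately as in Theorem~\ref{theorem:error_bound_discounted}. First I would rewrite the extended approximate value function as the fixed point of an operator expressed through the original data. Using the definitions of $C^*$ and $P^*$ together with $\hat{J}_\beta(x_1)=\hat{J}_\beta(q(x_1))$, the dynamic programming equation for $\hat{J}_\beta$ becomes, for $x\in B_i$,
\[
\hat{J}_\beta(x)=\inf_{u\in\mathds{U}}\int_{B_i}\Big[c(x',u)+\beta\int\hat{J}_\beta(x_1)\,\mathcal{T}(dx_1\mid x',u)\Big]\hat{\pi}_{y_i}(dx').
\]
Denote the right-hand side by $(\hat{T}\hat{J}_\beta)(x)$, let $T$ be the true Bellman operator so that $J^*_\beta=TJ^*_\beta$, and set $K=\alpha_c+\tfrac{\beta\alpha_T\|c\|_{\infty}}{1-\beta}$.

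Next I would derive the one-step approximation bound $|(\hat{T}\hat{J}_\beta)(x)-(T\hat{J}_\beta)(x)|\le KL(x)$. This follows from the elementary inequality $|\inf_u g(u)-\inf_u \tilde g(u)|\le\sup_u|g(u)-\tilde g(u)|$, the fact that $\hat{\pi}_{y_i}$ is a probability measure on $B_i$, Lipschitzness of $c$ (Assumption~\ref{assumption:lipschitz_cost_and_transitions}(i)), the total-variation Lipschitz bound on $\mathcal{T}$ together with $\|\hat{J}_\beta\|_{\infty}\le\|c\|_{\infty}/(1-\beta)$ (Assumption~\ref{assumption:lipschitz_cost_and_transitions}(ii)), and finally the definition of $L$ in \eqref{eq:loss_function}.

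Then, writing $e:=|\hat{J}_\beta-J^*_\beta|$ and decomposing $\hat{J}_\beta-J^*_\beta=(\hat{T}\hat{J}_\beta-T\hat{J}_\beta)+(T\hat{J}_\beta-TJ^*_\beta)$, the second difference satisfies $|(T\hat{J}_\beta)(x)-(TJ^*_\beta)(x)|\le\beta\sup_u\int e(x_1)\,\mathcal{T}(dx_1\mid x,u)$ by the same sup-of-inf inequality. Since $\mathds{U}$ is finite, the supremum is attained, and I define $\gamma_s(x)$ to be the maximizing action, which is a measurable stationary policy. Combining the two bounds gives the pointwise recursion
\[
e(x)\le KL(x)+\beta\int e(x_1)\,\mathcal{T}\big(dx_1\mid x,\gamma_s(x)\big).
\]
The crucial point, and the main departure from Theorem~\ref{theorem:error_bound_discounted}, is that the \emph{same} stationary $\gamma_s$ appears at every state, so unrolling the recursion produces genuine expectations along the Markov chain governed by $\gamma_s$ rather than a time-varying policy.

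Finally I would iterate: substituting the bound for $e(x_1)$ into itself $n$ times yields $e(x_0)\le K\sum_{t=0}^{n-1}\beta^t\mathbb{E}_{x_0}^{\gamma_s}[L(X_t)]+\beta^n\mathbb{E}_{x_0}^{\gamma_s}[e(X_n)]$. Because $\|e\|_{\infty}\le 2\|c\|_{\infty}/(1-\beta)<\infty$, the remainder $\beta^n\mathbb{E}_{x_0}^{\gamma_s}[e(X_n)]\to0$, giving the first displayed bound with the single stationary policy $\gamma_s$. The ``in particular'' statement \eqref{eq:stationary_policy} then follows immediately from $\mathbb{E}_{x_0}^{\gamma_s}[\,\cdot\,]\le\sup_{\gamma_s\in\Gamma_s}\mathbb{E}_{x_0}^{\gamma_s}[\,\cdot\,]$ since $\gamma_s\in\Gamma_s$. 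The main obstacle is the identification in the third step that the worst-case action defining $\gamma_s$ can be chosen consistently and statelessly (using finiteness of $\mathds{U}$ to sidestep measurable-selection difficulties), which is exactly what lets the supremum migrate out of the time-sum and collapse into one stationary policy.
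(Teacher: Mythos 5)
Your proposal is correct and follows essentially the same route as the paper's proof: both establish the pointwise recursion $V(x)\le K L(x)+\beta\sup_u\int V(x_1)\mathcal{T}(dx_1\mid x,u)$ from the two Bellman equations, observe that the supremum is attained by a single time-invariant selector (so the iteration unrolls along the Markov chain of one stationary policy), and kill the $\beta^n$ remainder using boundedness of the value functions. The only difference is presentational — you make the one-step error bound explicit via the operator decomposition $\hat{T}\hat{J}_\beta-T\hat{J}_\beta$ plus $T\hat{J}_\beta-TJ^*_\beta$, whereas the paper quotes this initial inequality from \cite{kara2023qlearning} and keeps $\beta\alpha_T\|\hat{J}_\beta\|_\infty$ symbolic until the end.
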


\begin{proof}
    The proof is in Appendix \ref{appendix:refined_error_bound}
\end{proof}

We introduce the \emph{discounted occupation measure}.  
For any measurable set \(D \in \mathcal{B}(\mathbb{X}\!\times\!\mathbb{U})\) we define  
\begin{align*}
  \nu_{x_0}^{\gamma_s}(D) 
  = \sum_{t=0}^{\infty} \beta^{t}\,
     \mathbb{E}_{x_0}^{\gamma_s}\!\bigl[\mathbf{1}_{D}(X_t,U_t)\bigr] = \sum_{t=0}^{\infty} \beta^{t}\,
     \mathbb{P}_{x_0}^{\gamma_s}\!\bigl((X_t,U_t)\in D\bigr),
\end{align*}
where the probability measure \(\mathbb{P}_{x_0}^{\gamma_s}\) over the state and action process is defined by the
initial condition $x_0$, the policy \(\gamma_s\), and the kernel \(\mathcal{T}\). For a product set \(A\times\mathds{U}\) with \(A\subseteq\mathds{X}\) measurable we obtain
\begin{align*}
  \nu_{x_0}^{\gamma_s}(A\times\mathds{U})
      = \sum_{t=0}^{\infty} \beta^{t}\,
         \mathbb{P}_{x_0}^{\gamma_s}\bigl(X_t\in A\bigr) = \sum_{t=0}^{\infty} \beta^{t}\, \mu_t^{\gamma_s}(A) =: \frac{1}{1-\beta}\,\mu_{\beta}^{\gamma_s}(A),
\end{align*}
where \(\mu_{\beta}^{\gamma_s}\) is a probability measure on \((\mathds{X},\mathcal{B}(\mathds{X}))\) obtained by
\[
  \mu_{\beta}^{\gamma_s}(A) := (1-\beta)\sum_{t=0}^{\infty} \beta^{t}\, \mu_t^{\gamma_s}(A).
\]

\begin{theorem} \label{theorem:optimized_error_bound}
Under Assumption \ref{assumption:model} and Assumption \ref{assumption:lipschitz_cost_and_transitions}, and given a collection of quantization bins \( \{B_i\}_{i=1}^M \), we have for any initial state \( x_0 \in \mathds{X} \):
\begin{align*}
\left| \hat{J}_{\beta}(x_0) - J^*_{\beta}(x_0) \right| &\leq \left( \alpha_c + \frac{\beta \alpha_T \|c\|_{\infty}}{1 - \beta} \right) \frac{1}{1-\beta}\sup_{\gamma_s\in\Gamma_s}\int_{\mathds{X}}L(x) \mu_{\beta}^{\gamma_s}(dx) 
\end{align*}
where \( \Gamma_s \) represents the set of stationary policies. The term \( \mu_{\beta}^{\gamma_s}(dx) \) denotes the normalized discounted occupation measure over the state space.
\end{theorem}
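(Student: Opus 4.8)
The plan is to start from the refined bound \eqref{eq:stationary_policy} of Theorem~\ref{theorem:refined_error_bound} and to recognize that, for each fixed stationary policy $\gamma_s$, the quantity $\mathbb{E}_{x_0}^{\gamma_s}\!\left[\sum_{t=0}^{\infty}\beta^t L(X_t)\right]$ appearing there is exactly the integral of $L$ against the discounted occupation measure on the state marginals. Concretely, I would first interchange the expectation and the infinite sum to write
\[
\mathbb{E}_{x_0}^{\gamma_s}\!\left[\sum_{t=0}^{\infty}\beta^t L(X_t)\right] = \sum_{t=0}^{\infty}\beta^t\,\mathbb{E}_{x_0}^{\gamma_s}[L(X_t)] = \sum_{t=0}^{\infty}\beta^t\int_{\mathds{X}} L(x)\,\mu_t^{\gamma_s}(dx),
\]
where $\mu_t^{\gamma_s}$ denotes the time-$t$ state marginal under $\gamma_s$ started at $x_0$.

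Then I would invoke the definition of the normalized discounted state occupation measure, namely $\mu_{\beta}^{\gamma_s}(A) = (1-\beta)\sum_{t=0}^{\infty}\beta^t\,\mu_t^{\gamma_s}(A)$, which, applied to the nonnegative measurable integrand $L$, yields termwise
\[
\sum_{t=0}^{\infty}\beta^t\int_{\mathds{X}} L(x)\,\mu_t^{\gamma_s}(dx) = \frac{1}{1-\beta}\int_{\mathds{X}} L(x)\,\mu_{\beta}^{\gamma_s}(dx).
\]
Chaining these two identities gives, for every fixed $\gamma_s\in\Gamma_s$, the exact equality
\[
\mathbb{E}_{x_0}^{\gamma_s}\!\left[\sum_{t=0}^{\infty}\beta^t L(X_t)\right] = \frac{1}{1-\beta}\int_{\mathds{X}} L(x)\,\mu_{\beta}^{\gamma_s}(dx),
\]
and since this holds identically in $\gamma_s$, taking the supremum over $\Gamma_s$ on both sides and substituting into \eqref{eq:stationary_policy} produces the claimed bound.

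The only point requiring care is the interchange of $\mathbb{E}_{x_0}^{\gamma_s}$ with the infinite sum in the first step. This is justified by Tonelli's theorem, using that $L$ is nonnegative by construction, $L(x)=\int_{B_i}\|x-x'\|_1\,\hat{\pi}_{y_i}(dx')\geq 0$, so all terms are nonnegative and the interchange is unconditional regardless of whether the series converges to a finite value. Finiteness of the resulting integrals is not needed for the inequality to hold, which is the natural formulation for the non-compact setting where $L$ may be unbounded; in the Foster--Lyapunov regimes treated later the integrals will in any case be finite. Beyond this, the argument is a direct bookkeeping identity relating the discounted expectation of $L$ along trajectories to its integral against $\mu_{\beta}^{\gamma_s}$, so I do not anticipate any substantive obstacle.
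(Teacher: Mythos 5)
Your proposal is correct and follows essentially the same route as the paper's proof: both start from the stationary-policy bound \eqref{eq:stationary_policy} of Theorem~\ref{theorem:refined_error_bound}, interchange the sum and expectation via Tonelli (using $L \geq 0$), and identify $\sum_{t}\beta^t\,\mathbb{E}_{x_0}^{\gamma_s}[L(X_t)]$ with $\frac{1}{1-\beta}\int_{\mathds{X}} L(x)\,\mu_\beta^{\gamma_s}(dx)$. The paper merely carries out this identity more explicitly by unpacking $L$ over the quantization bins and a fixed $x'$, but the substance is identical.
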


\begin{proof}
The proof is in Appendix \ref{appendix:optimized error bound}.
\end{proof}

%We now proceed to optimize the upper bound above. The following lemma is critical.
%To minimize the error bound obtained, we optimize the quantizer by choosing the centroids of the quantization bins properly. The following lemma will be useful in choosing the centroids. 

The following is a supporting result. 
%%are supporting observations.
%\begin{lemma}\cite{degroot1970optimal}\label{lemma:l1_centroid_general}
%Let $\mathds{X}\subseteq\mathds{R}$, and let  $X$  be a random variable with distribution  $\mu$  over a quantization bin  $B_i$. Suppose that \( \mathbb{E}(|X|) < \infty \). Then, the value  $y_i$  that minimizes the expected  distortion  $\mathbb{E}[ | X - y_i | ]$ is the median of  $X$  with respect to the probability measure $\mu$  over  $B_i$ .
%\end{lemma}

%Therefore, for a given quantization bin \( B_i \), the expected  distortion is minimized when the representative point \( y_i \) is chosen as the median of \( B_i \) with respect to the probability measure \( \mu \) over \( B_i \). This turns the normalized measure  $\hat{\pi}_{y_i}$ into the Dirac measure $\delta_{y_i}$ concentrated at $y_i$.  This result can be also applied to higher dimensional state spaces in $\mathbb{R}^n$:
%%The result can be extended to higher dimensions when considering the $\ell_1$ norm in $\mathbb{R}^n$. 

\begin{lemma}
\label{theorem:higher_dimensions}
Let $X = (X_1, X_2, \dots, X_n)$ be a $\mathds{R}^n$-valued random vector with finite expected absolute deviations in each coordinate. Then, the point $y_i^* = (y_{i,1}^*, y_{i,2}^*, \dots, y_{i,n}^*)$ that minimizes $\mathbb{E}[ \| X - y_i \|_1 ]$ is obtained by choosing each $y_{i,k}^*$ to be a median of the marginal distribution of $X_k$ over the bin $B_i$, for $k = 1, 2, \dots, n$.
\end{lemma}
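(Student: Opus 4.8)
The plan is to exploit the coordinatewise separability of the $\ell_1$ norm to reduce the $n$-dimensional optimization to $n$ independent scalar problems, and then to invoke the classical fact that the median minimizes the mean absolute deviation. Writing $y = (y_1,\dots,y_n)$, we have $\|X - y\|_1 = \sum_{k=1}^n |X_k - y_k|$, so by linearity of expectation
\[
\mathbb{E}\bigl[\|X - y\|_1\bigr] = \sum_{k=1}^n \mathbb{E}\bigl[|X_k - y_k|\bigr].
\]
Each summand is finite by the hypothesis of finite expected absolute deviation and, crucially, the $k$-th summand depends only on $y_k$. Hence the objective is a separable sum, minimized by minimizing each term $\mathbb{E}[|X_k - y_k|]$ over $y_k \in \mathbb{R}$ independently. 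It therefore suffices to prove the one-dimensional claim.

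For the scalar problem, I would fix a coordinate $k$, let $X_k$ have the $k$-th marginal of the law of $X$ over $B_i$, and show that any median $m$ minimizes $g(y) := \mathbb{E}[|X_k - y|]$. I would establish this by a direct integral comparison rather than by differentiation, so as to avoid smoothness assumptions on the distribution. Using the identity $|x - b| - |x - m| = \int_m^b \operatorname{sgn}(t - x)\,\mathrm{d}t$ together with Fubini's theorem (justified by the finite first absolute moment), one obtains for $b > m$
\[
g(b) - g(m) = \int_m^b \Bigl(\mathbb{P}(X_k < t) - \mathbb{P}(X_k > t)\Bigr)\,\mathrm{d}t.
\]
The next step is to argue that the integrand is nonnegative for every $t > m$: since $m$ is a median, $\mathbb{P}(X_k \le m) \ge \tfrac12$, whence $\mathbb{P}(X_k < t) \ge \mathbb{P}(X_k \le m) \ge \tfrac12$, while $\mathbb{P}(X_k > t) = 1 - \mathbb{P}(X_k \le t) \le \tfrac12 \le \mathbb{P}(X_k < t)$. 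A symmetric argument handles $b < m$, giving $g(b) \ge g(m)$ for all $b$, so the median is a global minimizer.

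Combining the two steps, choosing each $y_{i,k}^*$ to be a median of the $k$-th marginal of $X$ over $B_i$ simultaneously minimizes every term of the separable sum, and therefore $y_i^* = (y_{i,1}^*,\dots,y_{i,n}^*)$ minimizes $\mathbb{E}[\|X - y_i\|_1]$. I expect the difficulty here to be bookkeeping rather than any deep obstacle: the median need not be unique (the set of minimizers in each coordinate is a closed interval, and any selection works), and one must handle possible atoms of the distribution carefully in the inequalities above—this is precisely why the comparison is phrased with the strict and non-strict events $\{X_k < t\}$ and $\{X_k > t\}$ instead of assuming a continuous distribution. As an alternative route, convexity of $g$ yields the one-sided derivatives $g'_+(y) = 2\mathbb{P}(X_k \le y) - 1$ and $g'_-(y) = 2\mathbb{P}(X_k < y) - 1$, whose sign change across a median again characterizes the minimizer via the condition $g'_-(y) \le 0 \le g'_+(y)$.
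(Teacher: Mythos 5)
Your proof is correct and follows the same route as the paper: decompose $\mathbb{E}[\|X - y\|_1] = \sum_{k=1}^n \mathbb{E}[|X_k - y_k|]$ by coordinatewise separability of the $\ell_1$ norm and minimize each term independently. The only difference is that the paper simply cites the one-dimensional median fact (DeGroot, 1970), whereas you prove it directly via the identity $|x-b|-|x-m| = \int_m^b \operatorname{sgn}(t-x)\,\mathrm{d}t$ and Fubini; that scalar argument is sound, handles atoms and non-unique medians correctly, and merely makes the lemma self-contained.
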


The case with $n=1$ is proven in \cite{degroot1970optimal}. The generalization for the $n$-dimensional case is then immediate: Let $y_i \in \mathbb{R}^n$ . The expected  $\ell_1$  distortion writes as:
\begin{equation}
\mathbb{E}[ \| X - y_i \|_1 ] = \int_{B_i} \| x - y_i \|_1 \mu(dx) = \sum_{k=1}^n \int_{B_i} | x_k - y_{i,k} | \mu_k(dx),
\end{equation}
where $\mu_k$ is the marginal distribution of $X_k$ over the bin $B_i$. We thus can minimize each term separately and the result follows. 

%By the same reasoning as in the scalar case, as in Lemma \ref{lemma:l1_centroid_general}, $\int_{B_i} | x_k - y_{i,k} | \mu_k(dx)$ is minimized when $y_{i,k}$  is a median of $X_k$  with respect to the probablity measure $\mu_k$ over $B_i$. Therefore, the point  $y^*_i$  that minimizes the expected $\ell_1$ distortion is found as:
%\begin{equation}
%y_i^* = (y_{i,1}^*, y_{i,2}^*, \dots, y_{i,n}^*), \quad \text{for} \quad k=1,2,\dots,n
%\end{equation}
%where each $y_{i,k}^*$ is the median of the quantization bin $B_i$ with respect to the marginal distribution $\mu_k$.

\begin{corollary}[to Theorem \ref{theorem:optimized_error_bound} and Theorem \ref{theorem:higher_dimensions}] \label{median_bound_disc}
Under Assumption \ref{assumption:model} and Assumption \ref{assumption:lipschitz_cost_and_transitions}, and given a collection of quantization bins \( \{B_i\}_{i=1}^M \), we have for any initial state \( x_0 \in \mathds{X} \):
\begin{equation*}
\left| \hat{J}_{\beta}(x_0) - J^*_{\beta}(x_0) \right| \leq \left( \alpha_c + \frac{\beta \alpha_T \|c\|_{\infty}}{1 - \beta} \right) 
\sup_{\gamma_s \in \Gamma_s} \frac{1}{1 - \beta} \sum_{i=1}^{M} \int_{B_i} \|x - y_i\|_1 \mu_{\beta}^{\gamma_s}(dx),
\end{equation*} 
where \( \Gamma_s \) represents the set of stationary policies, and \( y_i \) is the median of the quantization bin \( B_i \) under the measure $\mu_{\beta}^{\gamma_s}(dx) $ where \( \mu_{\beta}^{\gamma_s}(dx) \) denotes the normalized discounted occupation measure over the state space.
\end{corollary}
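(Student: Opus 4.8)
The plan is to combine the freedom in choosing the reference measure $\pi$ (equivalently, the normalized bin measures $\hat{\pi}_{y_i}$) in the finite-model construction with the $\ell_1$-optimality of the median established in Lemma~\ref{theorem:higher_dimensions}. The starting point is Theorem~\ref{theorem:optimized_error_bound}, which holds for \emph{any} admissible choice of $\pi$ with $\pi(B_i)>0$, and hence for any induced family $\{\hat{\pi}_{y_i}\}_{i=1}^M$ entering the loss function \eqref{eq:loss_function}.

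First I would specialize the construction by taking $\pi$ to be supported on the representative points, i.e. $\pi=\sum_{i=1}^M w_i\,\delta_{y_i}$ with weights $w_i>0$, so that $\hat{\pi}_{y_i}=\delta_{y_i}$ on each bin $B_i$. Substituting this into the definition \eqref{eq:loss_function} collapses the potential to the distance to the representative point, namely $L(x)=\int_{B_i}\|x-x'\|_1\,\delta_{y_i}(dx')=\|x-y_i\|_1$ for every $x\in B_i$. Plugging this identity into the bound of Theorem~\ref{theorem:optimized_error_bound} and splitting the integral over the partition yields
\begin{equation*}
\left|\hat{J}_\beta(x_0)-J^*_\beta(x_0)\right|\le\left(\alpha_c+\frac{\beta\alpha_T\|c\|_\infty}{1-\beta}\right)\frac{1}{1-\beta}\sup_{\gamma_s\in\Gamma_s}\sum_{i=1}^M\int_{B_i}\|x-y_i\|_1\,\mu_\beta^{\gamma_s}(dx),
\end{equation*}
which already has the form claimed in the corollary for an arbitrary choice of representatives $\{y_i\}$.

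It then remains to choose the representatives so as to minimize the right-hand side. For a fixed stationary policy $\gamma_s$ the sum decouples across bins, and minimizing each term $\int_{B_i}\|x-y_i\|_1\,\mu_\beta^{\gamma_s}(dx)$ over $y_i$ is exactly the $\ell_1$ distortion-minimization problem of Lemma~\ref{theorem:higher_dimensions}, applied to $\mu_\beta^{\gamma_s}$ restricted to $B_i$ (the positive scalar $\mu_\beta^{\gamma_s}(B_i)$ does not affect the argmin). That lemma identifies the minimizer as the coordinatewise median $y_i$ of the marginals of $\mu_\beta^{\gamma_s}$ over $B_i$, giving the stated bound. I would further note that, by Jensen's inequality, spreading the reference measure can only increase the potential $L$, so the Dirac-at-median choice in fact minimizes the bound of Theorem~\ref{theorem:optimized_error_bound} among \emph{all} admissible reference measures, not merely among point masses.

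The main obstacle is the interchange of the supremum over $\gamma_s\in\Gamma_s$ with the per-bin optimization of the representatives: the optimal median $y_i$ depends on the policy-dependent occupation measure $\mu_\beta^{\gamma_s}$, whereas a single finite model must fix the quantizer (and hence the $y_i$) once and for all. The clean way to handle this is to read the corollary as a per-policy design statement---for each $\gamma_s$ the displayed integral is minimized by the corresponding median---and, when committing to a single quantizer, to take the median with respect to the occupation measure of the relevant (e.g. worst-case or exploration) policy. This is the point that must be stated carefully rather than a genuine computational difficulty.
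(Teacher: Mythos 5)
Your proposal is correct and follows exactly the route the paper intends: specialize the weighting measures $\hat{\pi}_{y_i}$ to Dirac masses at the representatives so that $L(x)=\|x-y_i\|_1$ on $B_i$, apply Theorem \ref{theorem:optimized_error_bound}, and then invoke Lemma \ref{theorem:higher_dimensions} to identify the coordinatewise median as the $\ell_1$-optimal representative (the paper states the corollary without a written proof, and its later remark about choosing $\pi_{y_i}$ as Dirac measures at medians confirms this is the intended argument). Your closing observation about the dependence of the optimal $y_i$ on the policy-dependent measure $\mu_\beta^{\gamma_s}$ inside the supremum is a legitimate subtlety that the paper's own statement glosses over, and your reading of it as a per-policy design statement is the right resolution.
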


%\subsection{Refined Bounds Using Lyapunov Stability}

Our following result illustrates how the expected loss during the application of a quantization can be bounded by the use of a Lyapunov function for non-compact state spaces. 

\begin{theorem}\label{theorem:uniform_quantizer_lyapunov}

Let $\mathds{X}\subseteq\mathbb{R}^{n}$, $b\ge 0$, $\alpha>0$, and define the Lyapunov function 
\(
  V(x)=\|x\|_{1}^{m}.
\)
 Assume the controlled process $\{X_t\}$ satisfies the drift condition
\[
\label{eq:lyapunov_eq_nd}
   \mathbb{E}\bigl[V(X_{t+1})\mid X_t=x,U_t=u\bigr]
   \le V(x)-\alpha V(x)+b,
   \qquad x\in\mathds{X},u\in\mathds{U}
   \]
  Under Assumption~\ref{assumption:model} and 
Assumption~\ref{assumption:lipschitz_cost_and_transitions},
let $M$ be the total number of hyper-cubic bins in the uniform
quantizer.  
Then, for every initial state $x_{0}\in\mathds{X}$ we have that,
\begin{equation}
   \bigl|\hat J_{\beta}(x_{0})-J^{*}_{\beta}(x_{0})\bigr|
   \le \left( \alpha_c + \frac{ \beta \alpha_T \| c \|_{\infty} }{ 1 - \beta } \right) \frac{ (2n+1) C^{1/m} }{ (M^{ \frac{1}{n}(1-\frac{1}{m}) }) (1-\beta) },
   \label{eq:error_bound_nd}
\end{equation}
where
\[
   C:=\frac{\|x_{0}\|_{1}^{m}(1-\beta)+b\beta}{1-\beta(1-\alpha)} .
\]
%Assume that the state space $\mathds{X} \subseteq \mathbb{R} $ and let $b \geq 0$, $\alpha >0$ and $V:\mathds{X} \to[0,\infty)$. Assume the state process $\{X_t\}$ satisfies the following condition:
%\begin{equation}
%\mathbb{E}^{\gamma_s}[V(X_{t+1})|X_t=x,U_t=u] \leq V(x) - \alpha V(x) + b, \quad x \in \mathds{X},u \in \mathds{U}\label{eq:lyapunov_eq}
%\end{equation}
%for all $\gamma_s\in\Gamma_s$ where $V(x)=|x|^m$, for some $m>1$.
%
%Then, under Assumption~\ref{assumption:model} and Assumption~\ref{assumption:lipschitz_cost_and_transitions}, with $M$ denoting the number of bins, there exists a quantizer which leads to the following error bound for any initial state \( x_0 \in \mathds{X}\):
%\begin{equation*}
%\left| \hat{J}_{\beta}(x_0) - J^*_{\beta}(x_0) \right| \leq \left( \alpha_c + \frac{ \beta \alpha_T \| c \|_{\infty} }{ 1 - \beta } \right) \frac{ 3 C^{1/m} }{ (M^{ 1 - 1/m }) (1-\beta) },
%\end{equation*}
%where $C$ is a constant, which depends on the initial state $x_0$, defined as: 
%\begin{equation*}
%C:= \frac{|x_0|^m(1-\beta)+b\beta}{1-\beta(1-\alpha)}
%\end{equation*}
\end{theorem}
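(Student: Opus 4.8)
The plan is to start from the occupation‑measure bound of Theorem~\ref{theorem:optimized_error_bound}, which already reduces the problem to controlling $\sup_{\gamma_s\in\Gamma_s}\int_{\mathds{X}}L(x)\,\mu_{\beta}^{\gamma_s}(dx)$, and to combine it with a moment estimate extracted from the drift condition. The first step is to convert the one‑step drift $\mathbb{E}[V(X_{t+1})\mid X_t=x,U_t=u]\le(1-\alpha)V(x)+b$ into a bound on the discounted occupation measure of $V$. Writing $v_t=\mathbb{E}_{x_0}^{\gamma_s}[V(X_t)]$, the drift gives the scalar recursion $v_{t+1}\le(1-\alpha)v_t+b$ with $v_0=\|x_0\|_1^m$, hence $v_t\le(1-\alpha)^t\|x_0\|_1^m+b\frac{1-(1-\alpha)^t}{\alpha}$. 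Since $\int_{\mathds{X}}V\,d\mu_{\beta}^{\gamma_s}=(1-\beta)\sum_{t\ge0}\beta^t v_t$, summing the two resulting geometric series produces $\int_{\mathds{X}}V\,d\mu_{\beta}^{\gamma_s}\le C$ with exactly the constant $C$ in the statement, and this estimate is uniform over $\gamma_s$, so the supremum over $\Gamma_s$ costs nothing at this stage.

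The second step is the quantizer design together with a splitting of the loss integral. I would fix a truncation radius $R>0$, partition the cube $[-R,R]^n$ into $M$ congruent hyper‑cubes of side $2R/M^{1/n}$, and collect $\{\|x\|_\infty>R\}$ into a single overflow cell. On each bounded cell the $\ell_1$‑diameter is $n\cdot 2R/M^{1/n}$, so $L(x)\le 2nR/M^{1/n}$ there, and the bounded contribution to $\int L\,d\mu_{\beta}^{\gamma_s}$ is at most $2nR/M^{1/n}$. On the tail the diameter estimate is useless, and this is precisely where the moment bound enters: from $\|x\|_\infty\le\|x\|_1$ and Markov's inequality one gets $\mu_{\beta}^{\gamma_s}(\|x\|_\infty>R)\le C/R^m$, while Hölder's inequality with exponents $m$ and $m/(m-1)$ yields $\int_{\|x\|_\infty>R}\|x\|_1\,d\mu_{\beta}^{\gamma_s}\le(\int V\,d\mu_{\beta}^{\gamma_s})^{1/m}\,\mu_{\beta}^{\gamma_s}(\|x\|_\infty>R)^{1-1/m}\le C\,R^{-(m-1)}$.

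Then I optimize over $R$: choosing $R$ so that $R^m=C\,M^{1/n}$ balances the two pieces and gives $\int L\,d\mu_{\beta}^{\gamma_s}\le(2n+1)\,C^{1/m}\,M^{-\frac1n(1-\frac1m)}$ uniformly in $\gamma_s$, after which substituting into Theorem~\ref{theorem:optimized_error_bound} (whose prefactor already carries the $1/(1-\beta)$) reproduces \eqref{eq:error_bound_nd} with the stated constant $2n+1$. The main obstacle is the tail term: since the overflow cell is unbounded, the entire argument hinges on trading the growth of $L$ there against the decay of $\mu_{\beta}^{\gamma_s}$ supplied by the Lyapunov moment bound. A related technical subtlety is that $L$ is defined through the reference measure $\hat{\pi}_{y_i}$ rather than through the representative $y_i$, so on the overflow cell one should either pass to the median formulation of Corollary~\ref{median_bound_disc} or bound $L(x)\le\|x\|_1+\int\|x'\|_1\hat{\pi}(dx')$ and absorb the mean‑deviation term; carefully accounting for these constants is what ultimately pins down the factor $2n+1$.
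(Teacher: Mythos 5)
Your proposal is correct and follows essentially the same route as the paper's proof: the occupation-measure bound of Theorem~\ref{theorem:optimized_error_bound}, a geometric moment bound $\int V\,d\mu_\beta^{\gamma_s}\le C$ derived from the drift (you via a direct scalar recursion on $\mathbb{E}[V(X_t)]$, the paper via an equivalent supermartingale argument), a uniform grid on a truncated cube plus an overflow cell mapped to the origin, and the same Markov/H\"older trade-off with the same choice $R^m=CM^{1/n}$ yielding the constant $2n+1$. The overflow subtlety you flag is resolved in the paper exactly as you suggest, by taking $\hat{\pi}_{M+1}$ to be the Dirac measure at $0$ so that $L(x)=\|x\|_1$ there.
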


\begin{proof}
    The proof is in Appendix~\ref{appendix:uniform_quantizer_lyapunov}.
\end{proof}

We note that the existence discussion is constructive and the proof of Theorem \ref{theorem:uniform_quantizer_lyapunov} utilizes an explicit quantizer which attains the bound presented. Observe that as $M \to \infty$, the error converges to zero.

%The next result extends Theorem~\ref{theorem:uniform_quantizer_lyapunov} to an $n$–dimensional state space $\mathds{X}\subseteq\mathbb{R}^{n}$ (with the $\ell_{1}$–norm).

%\begin{theorem}\label{theorem:uniform_quantizer_lyapunov_nd}
%Let $\mathds{X}\subseteq\mathbb{R}^{n}$, $b\ge 0$, $\alpha>0$, and define the Lyapunov function 
%\(
%  V(x)=\|x\|_{1}^{m}
%\)
%with exponent $m>n$.  Assume the controlled process $\{X_t\}$ satisfies the drift condition
%\[
%\label{eq:lyapunov_eq_nd}
%   \mathbb{E}\bigl[V(X_{t+1})\mid X_t=x,U_t=u\bigr]
%   \le V(x)-\alpha V(x)+b,
%   \qquad x\in\mathds{X},u\in\mathds{U}
%   \]
%  Under Assumption~\ref{assumption:model} and 
%Assumption~\ref{assumption:lipschitz_cost_and_transitions},
%let $M$ be the total number of hyper-cubic bins in the uniform
%quantizer.  
%Then, for every initial state $x_{0}\in\mathds{X}$ we have that,
%\begin{equation}
%   \bigl|\hat J_{\beta}(x_{0})-J^{*}_{\beta}(x_{0})\bigr|
%   \le\Bigl(\alpha_{c}+\frac{\beta\alpha_{T}\|c\|_{\infty}}{1-\beta}\Bigr)\frac{(2n+1)C^{1/m}}{M^{\frac{1}{n}-\frac{1}{m}}(1-\beta)},
%   \label{eq:error_bound_nd}
%\end{equation}
%where
%\[
%   C:=\frac{\|x_{0}\|_{1}^{m}(1-\beta)+b\beta}{1-\beta(1-\alpha)} .
%\]
%\end{theorem}

%\begin{proof}
%The proof is in Appendix~\ref{appendix:uniform_quantizer_lyapunov_n}. 
%\end{proof}

%The explicit quantizer is presented in the proof of Theorem \ref{theorem:uniform_quantizer_lyapunov_nd}.

\subsection{Refined Error Bounds for the Average Cost Criterion}
\label{section:refined_average}

In this section, we extend the analysis to the average cost criterion in Markov Decision Processes (MDPs). 
We focus on the long-run average cost criterion, defined as:
\[
J_{\text{avg}}(x_0,\gamma) = \limsup_{T \to \infty} \frac{1}{T} \mathbb{E}^{\gamma}_{x_0} \left[ \sum_{t=0}^{T-1} c(X_t, U_t) \right],
\]
where \( \gamma = \{\gamma_t\}_{t=0}^\infty \) is the policy, and \( J_{\text{avg}}(x_0) \) represents the average cost starting from state \( x_0 \).

We now define the average cost problem as follows:
\[
J_{\text{avg}}^*(x) := \inf_{\gamma} J_{\text{avg}}(x, \gamma) = \inf_{\gamma \in \Gamma_A} \limsup_{T \to \infty} \frac{1}{T} \mathbb{E}_x^\gamma \left[ \sum_{t=0}^{T-1} c(X_t, U_t) \right]
\]
where \( \Gamma_A \) represents the set of all admissible policies.

\begin{assumption}[Minorization Condition]\label{assumption:minorization}
There exists a non-trivial positive measure $\mu$ on $\mathds{X}$ such that for all $(x, u) \in \mathds{X} \times \mathds{U}$:
\begin{equation}
\mathcal{T}(B \mid x, u) \geq \mu(B), \quad \forall B \in \mathcal{B}(\mathds{X}).
\label{eq:minorization}
\end{equation}
\end{assumption}

Before the result, we introduce the Average Cost Optimality Equations (ACOEs) for the original and the finite model:
\begin{align*}
   & h(x) = \inf_{u \in \mathds{U}} \left\{ c(x,u) + \int_{\mathds{X}} h(x_1) \mathcal{T}(dx_1 | x,u) \right\} - \int_{\mathds{X}} h(x) \mu(dx)\\
&\hat{h}(y) = \inf_{u \in \mathds{U}} \left\{ C^*(y,u) + \sum_{y_1} \hat{h}(y_1) P^*(y_1 | y,u) \right\} - \int_{\mathds{X}} \hat{h}(q(x)) \mu(dx).
\end{align*}
Existence of the solutions to these equations is guaranteed under  Assumption \ref{assumption:minorization}. We will refer to the functions $h$ and $\hat{h}$ as the relative value functions in the following. 
\begin{theorem}
\label{theorem:error_bound_average_alt}
Under Assumptions \ref{assumption:model}, \ref{assumption:lipschitz_cost_and_transitions} and  \ref{assumption:minorization}, $J_{\text{avg}}^*(x)$ and $\hat{J}_{\text{avg}}(x)$ are constants for any $x \in \mathds{X}$:
\begin{align*}
    &j^* = J_{\text{avg}}^*(x) \quad \text{for all } x \in \mathds{X}, \\
    &\hat{j} = \hat{J}_{\text{avg}}(x) \quad \text{for all } x \in \mathds{X}
\end{align*}
    Moreover, 
\begin{equation}
| J_{\text{avg}}^*(x_0) - \hat{J}_{\text{avg}}(x_0) | \leq \left(\alpha_c+ \frac{\alpha_T \|c\|_\infty}{\mu(\mathds{X})} \right) \int_{\mathds{X}} L(x) \pi_{\gamma_s}(dx),
\end{equation}
where $\gamma_s$ is the policy that achieves the supremum for $\sup_u\int |h(x_1)-\hat{h}(x_1)|\mathcal{T}(dx_1|x,u)$
and where  $\pi_{\gamma_s}$ is the invariant measure induced by policy $\gamma_s$.
\end{theorem}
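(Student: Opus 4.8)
The plan is to use the minorization condition (Assumption~\ref{assumption:minorization}) to recast both ACOEs as genuine fixed-point equations for a sub-stochastic kernel, and then to run an argument parallel to the discounted case of Theorem~\ref{theorem:refined_error_bound}, the contraction now being supplied by the defect mass $1-\mu(\mathds{X})$ rather than by $\beta$. First I would record the consequences of Assumption~\ref{assumption:minorization}: being a Doeblin-type condition, it guarantees that every stationary policy induces a uniformly ergodic chain with a unique invariant probability measure, that the ACOEs admit bounded solutions, and that the optimal average costs are constant, yielding $j^*=J_{\text{avg}}^*(x)$ and $\hat j=\hat J_{\text{avg}}(x)$. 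Writing $Q(\cdot\mid x,u):=\mathcal{T}(\cdot\mid x,u)-\mu(\cdot)\ge 0$, a positive kernel of total mass $1-\mu(\mathds{X})<1$, the displayed ACOE for $h$ becomes $h(x)=\inf_{u}\{c(x,u)+\int_{\mathds{X}} h\,Q(dx_1\mid x,u)\}$, so $h$ is the fixed point of a sup-norm contraction of modulus $1-\mu(\mathds{X})$; hence $\|h\|_\infty\le \|c\|_\infty/\mu(\mathds{X})$, and matching the standard form of the ACOE gives $j^*=\int_{\mathds{X}}h\,d\mu$. The finite model inherits this structure, since $P^*(y_j\mid y_i,u)=\int_{B_i}\mathcal{T}(B_j\mid x',u)\hat{\pi}_{y_i}(dx')\ge\mu(B_j)$ shows the pushforward $\hat\mu(\{y_j\}):=\mu(B_j)$ minorizes $P^*$ with the same mass; thus $\|\hat h\|_\infty\le\|c\|_\infty/\mu(\mathds{X})$ and $\hat j=\int_{\mathds{X}}\hat h(q(x))\,\mu(dx)$. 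Consequently $|j^*-\hat j|\le\int_{\mathds{X}}v\,d\mu$, where $v(x):=|h(x)-\hat h(q(x))|$ and $\hat h$ is extended to $\mathds{X}$ via $q$.

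Next I would derive a one-step inequality for $v$. Fixing $x\in B_i$ and applying $|\inf_u A(u)-\inf_u B(u)|\le\sup_u|A(u)-B(u)|$ to the two fixed-point equations, the cost gap is controlled by Assumption~\ref{assumption:lipschitz_cost_and_transitions}(i) as $|c(x,u)-C^*(y_i,u)|\le\alpha_c L(x)$. For the future-value gap I would use the identity $\sum_j\hat h(y_j)P^*(y_j\mid y_i,u)=\int_{B_i}\bigl(\int_{\mathds{X}} \hat h(q(x_1))\,\mathcal{T}(dx_1\mid x',u)\bigr)\hat{\pi}_{y_i}(dx')$, add and subtract $\int_{\mathds{X}} \hat h(q(x_1))\,\mathcal{T}(dx_1\mid x,u)$, and invoke Assumption~\ref{assumption:lipschitz_cost_and_transitions}(ii) together with the span bound $\|\hat h\|_\infty\le\|c\|_\infty/\mu(\mathds{X})$ to bound the averaging error by $\tfrac{\alpha_T\|c\|_\infty}{\mu(\mathds{X})}L(x)$; the remaining term is exactly $\int_{\mathds{X}}(h-\hat h\circ q)\,Q(dx_1\mid x,u)$. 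Taking the supremum over the (finite) action set and observing that $\int_{\mathds{X}} v\,Q(dx_1\mid x,u)=\int_{\mathds{X}} v\,\mathcal{T}(dx_1\mid x,u)-\int_{\mathds{X}} v\,d\mu$ differs from $\int_{\mathds{X}} v\,\mathcal{T}(dx_1\mid x,u)$ by a $u$-independent constant, the maximizing action is precisely the $\gamma_s$ of the statement, and I obtain the recursion $v(x)+\int_{\mathds{X}}v\,d\mu\le G(x)+\int_{\mathds{X}}v\,\mathcal{T}(dx_1\mid x,\gamma_s(x))$, with $G:=\bigl(\alpha_c+\tfrac{\alpha_T\|c\|_\infty}{\mu(\mathds{X})}\bigr)L$.

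Finally I would integrate this recursion against the invariant measure $\pi_{\gamma_s}$ of the kernel $\mathcal{T}(\cdot\mid\cdot,\gamma_s(\cdot))$. By invariance, $\int_{\mathds{X}}\bigl(\int_{\mathds{X}} v\,\mathcal{T}(dx_1\mid x,\gamma_s(x))\bigr)\pi_{\gamma_s}(dx)=\int_{\mathds{X}}v\,d\pi_{\gamma_s}$, and since $v$ is bounded this term is finite and cancels the $\int_{\mathds{X}}v\,d\pi_{\gamma_s}$ produced on the left, leaving $\int_{\mathds{X}}v\,d\mu\le\int_{\mathds{X}}G\,d\pi_{\gamma_s}$. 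Combined with $|j^*-\hat j|\le\int_{\mathds{X}}v\,d\mu$ this is exactly the claimed bound. The main obstacle is the groundwork of the first paragraph rather than the estimates: one must verify that the minorization genuinely yields bounded relative value functions with the correct normalization $j^*=\int_{\mathds{X}}h\,d\mu$, that the $\arg\max$ policy $\gamma_s$ is a well-defined measurable stationary policy --- here the earlier reduction to a finite action space is what makes the selection and its unique invariant measure $\pi_{\gamma_s}$ available --- and that the invariance identity used in the cancellation is legitimate, for which the boundedness of $v$ is precisely what is required.
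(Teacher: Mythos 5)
Your proposal is correct, and up to the final step it coincides with the paper's proof: the same rewriting of both ACOEs via the sub-stochastic kernel $\mathcal{T}-\mu$, the same contraction argument giving $\|h\|_\infty,\|\hat h\|_\infty\le \|c\|_\infty/\mu(\mathds{X})$, the same reduction $|j^*-\hat j|\le\int V\,d\mu$, and the same one-step inequality $V(x)+\int V\,d\mu\le\bigl(\alpha_c+\tfrac{\alpha_T\|c\|_\infty}{\mu(\mathds{X})}\bigr)L(x)+\int V(x_1)\,\mathcal{T}(dx_1\mid x,\gamma_s(x))$. Where you diverge is the last passage: the paper iterates this inequality $T$ times, divides by $T$, and lets $T\to\infty$, identifying $\limsup_T\frac{1}{T}\sum_{t=0}^{T-1}\mathbb{E}^{\gamma_s}[L(X_t)]$ with $\int L\,d\pi_{\gamma_s}$; you instead integrate the one-step inequality once against $\pi_{\gamma_s}$ and cancel the finite term $\int V\,d\pi_{\gamma_s}$ from both sides using invariance and the boundedness of $V$. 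Your route is arguably the cleaner of the two: the cancellation only requires the invariance identity for the \emph{bounded} function $V$, and it delivers $\int L\,d\pi_{\gamma_s}$ on the right-hand side directly (trivially valid even if that integral is infinite), whereas the paper's Ces\`aro limit implicitly invokes an ergodic theorem for the possibly unbounded function $L$ on a non-compact space, which is the less transparent step. The paper's iteration, on the other hand, keeps the argument structurally parallel to the discounted-cost proof of Theorem~\ref{theorem:refined_error_bound} and retains the finite-horizon intermediate bound. Both establish the stated inequality; your supporting remarks on measurable selection of $\gamma_s$ over the finite action set and on the finite model inheriting the minorization through $P^*(y_j\mid y_i,u)\ge\mu(B_j)$ are accurate and match the paper's underlying assumptions.
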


\begin{proof}
    The proof is in Appendix \ref{appendix:error_bound_average_alt}
\end{proof}

%Now, we state the following theorem, which will be useful in the proof of our next result:
%
%\begin{theorem}[Comparison Theorem] \cite[Theorem 14.2.2]{meyn1993markov}\label{theorem:comparison_theorem}
%Let \( V:\mathds{X} \to[0,\infty) \) and \( f, g : \mathds{X} \to [0,\infty) \). Let \( \{x_n\} \) be a Markov chain on \( \mathds{X} \). If the following condition is satisfied:
%\[
%\int_{\mathds{X}} P(x, dy) V(y) \leq V(x) - f(x) + g(x), \quad x \in \mathds{X},
%\]
%then, for any stopping time \( \tau \) with \( \mathbb{P}(\tau < \infty) = 1 \), it holds that
%\[
%\mathbb{E}\left[\sum_{t=0}^{\tau-1} f(x_t)\right] \leq V(x_0) + \mathbb{E}\left[\sum_{t=0}^{\tau-1} g(x_t)\right].
%\]
%\end{theorem}
%
%\begin{proof}
%See \cite{yuksel2024lecture}.
%\end{proof}
%
%Now, we present our result under Lyapunov condition similar to the quantizer design for discounted cost criterion.

\begin{theorem}\label{theorem:uniform_quantizer_lyapunov_average}
Assume $\mathds{X}\subseteq\mathbb{R}^n$, $f(x)=\|x\|_1^m$ with $m>1$, and suppose
\begin{align}\label{eq:lyapunov_eq_avg}
\sup_{x,u}\mathbb{E}[V(X_{t+1})\mid X_t=x,U_t=u]\le V(x)-f(x)+b,
\end{align}
where $b\ge 0$ and $V:\mathds{X}\to[0,\infty)$ is a Lyapunov function.

Then, under Assumption~\ref{assumption:model}, Assumption~\ref{assumption:lipschitz_cost_and_transitions}  and Assumption~\ref{assumption:minorization}, with $M$ denoting the number of bins, there exists a quantizer which leads to the following error bound for any initial state \( x_0 \in \mathds{X}\):
\begin{equation}
|\hat{J}_{avg}(x_0)-J_{avg}^*(x_0)| \le  \left(\alpha_c+ \frac{\alpha_T \|c\|_\infty}{\mu(\mathds{X})} \right)\frac{(2n+1)b^{1/m}}{M^{1/n(1-1/m)}}.
\end{equation}
\end{theorem}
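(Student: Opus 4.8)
The plan is to reduce the claim to the abstract bound of Theorem~\ref{theorem:error_bound_average_alt} and then to control the single remaining quantity, $\int_{\mathds{X}} L(x)\,\pi_{\gamma_s}(dx)$, by combining a stationary moment estimate with an explicit uniform quantizer. By Theorem~\ref{theorem:error_bound_average_alt} (whose hypotheses hold under Assumptions~\ref{assumption:model}, \ref{assumption:lipschitz_cost_and_transitions}, \ref{assumption:minorization}), it suffices to exhibit a quantizer with $M$ bins for which $\int_{\mathds{X}} L(x)\,\pi_{\gamma_s}(dx) \le (2n+1)\,b^{1/m}\,M^{-\frac1n(1-\frac1m)}$, where $\pi_{\gamma_s}$ is the invariant measure of the stationary policy $\gamma_s$ furnished by that theorem. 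The quantizer will be chosen independently of $\gamma_s$, so it is enough to obtain this estimate uniformly over the invariant measure of every stationary policy.

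The first key step is a stationary moment bound, which is the average-cost analogue of the constant $C$ appearing in the discounted estimate of Theorem~\ref{theorem:uniform_quantizer_lyapunov}. Fix any stationary policy and let $\pi$ denote its invariant measure (existence is guaranteed by Assumption~\ref{assumption:minorization}). Integrating the drift inequality \eqref{eq:lyapunov_eq_avg} against the stationary state--action distribution and using invariance to cancel $\int V\,d\pi$ on both sides, I would obtain $\int_{\mathds{X}}\|x\|_1^m\,\pi(dx)=\int f\,d\pi \le b$. The one subtlety here is integrability: the cancellation of $\int V\,d\pi$ is only legitimate once this integral is known to be finite, so I would first run the argument with $V$ replaced by a truncation $V\wedge N$ (or, equivalently, apply the drift along the chain and pass to the limit via Fatou), which yields $\int f\,d\pi\le b$ without assuming $\int V\,d\pi<\infty$ a priori. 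Crucially this bound depends only on $b,m$, not on the policy, so it applies to the particular $\gamma_s$ of Theorem~\ref{theorem:error_bound_average_alt}.

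With the moment bound $\int\|x\|_1^m\,d\pi\le b$ in hand, the quantization estimate is identical in structure to the one used in Theorem~\ref{theorem:uniform_quantizer_lyapunov}, with the discounted constant $C$ replaced by $b$. Concretely, I would take the uniform quantizer whose cells are the axis-parallel cubes of edge $\Delta$ tiling the box $[-R,R]^n$, with the boundary cells extended to infinity so that the $M=(2R/\Delta)^n$ cells partition all of $\mathds{X}$, and $y_i$ the cell center. On each bounded interior cell one has $\|x-x'\|_1\le n\Delta$, hence $L(x)\le n\Delta$ there, and summing against $\pi$ contributes at most $n\Delta$. Every unbounded boundary cell lies in $\{\|x\|_1\ge R-\Delta\}$, and on such cells I would use $\|x-x'\|_1\le\|x\|_1+\|x'\|_1$ to get $\int_{B_i}L\,d\pi\le 2\int_{B_i}\|x\|_1\,d\pi$; summing and applying the bound $\|x\|_1\le\|x\|_1^m/(R-\Delta)^{m-1}$ on this set together with the moment estimate gives a tail contribution $\lesssim b/(R-\Delta)^{m-1}$.

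Finally, I would optimize the free parameters: writing $\Delta=2R/M^{1/n}$ so that the cell count is exactly $M$, the bound takes the form $n\Delta+\mathrm{const}\cdot b/(R-\Delta)^{m-1}$, and minimizing over $R$ balances the interior term (decaying in $\Delta$) against the tail term (decaying in $R$). Both contributions then scale as $b^{1/m}M^{-\frac1n(1-\frac1m)}$, and the constant $2n+1$ emerges from this choice exactly as in the discounted case; substituting back into Theorem~\ref{theorem:error_bound_average_alt} yields the stated bound. I expect the main obstacle to be the tail analysis: because $L$ is genuinely infinite on the unbounded boundary cells, its integral can only be tamed through the moment bound of the second step, and making the interior/tail trade-off yield precisely the exponent $\frac1n(1-\frac1m)$ (and the constant $2n+1$) is where the Lyapunov hypothesis does its real work.
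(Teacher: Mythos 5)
Your proposal is correct and follows essentially the same route as the paper: reduce to Theorem~\ref{theorem:error_bound_average_alt}, derive the stationary moment bound $\int \|x\|_1^m\,\pi(dx)\le b$ from the drift condition (the paper cites the Comparison Theorem where you carry out the truncation argument explicitly), and then use a uniform quantizer on a box of radius $R\sim (bM^{1/n})^{1/m}$ with the tail controlled by that moment bound, balancing interior and overflow contributions to get $(2n+1)b^{1/m}M^{-\frac1n(1-\frac1m)}$. The only cosmetic differences are that the paper uses a single overflow bin mapped to the origin and bounds its contribution via H\"older plus Markov's inequality, whereas you extend the boundary cells and use the pointwise bound $\|x\|_1\le\|x\|_1^m/(R-\Delta)^{m-1}$; both yield the same scaling.
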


\begin{proof}
    The proof is in Appendix~\ref{appendix:uniform_quantizer_lyapunov_average}
\end{proof}

As earlier, as $M \to \infty$ the error converges to zero.

%\sy{Ali: the proof can be applied to spaces beyond $\mathbb{R}^n$, such as probability measure valued spaces by replacing the $L_1$ distance with $W_1$ distance. Should we add a note on this? Though. the Lyapunov analysis would be much more complicated...}

\section{Quantizer Design for Quantized Q-Learning and Empirical Model Learning}

In this section, we study the design of quantizers for the Q-learning algorithm, and via equivalence, to empirical model learning. To this end, we first refine the error expression obtained in  \cite{kara2021convergence}, and obtain an explicit error bound by extending the analysis presented in Section~\ref{section:refined_discounted}, and also design quantizers applicable for unbounded state spaces.

\subsection{Quantized Q-Learning Algorithm and its Convergence}

As noted in \cite{kara2021convergence}, quantizing the state space of a continuous MDP converts the problem into a Partially Observable Markov Decision Process (POMDP) which is then non-Markovian. Let $q$ be a quantizer mapping $\mathbb{X}$ to a finite set as described in Section \ref{finiteModelAppC}.

%\[
%Y_t = q(X_t),
%\]
%where \( q: \mathbb{X} \rightarrow \mathbb{Y} \) is the quantizer, which maps the continuous state space \( \mathbb{X} \) to the finite set of representative points \( \mathbb{Y} = \{y_1, y_2, \ldots, y_M\} \).
%
%The observation model in our setup is described as:
%\[
%O(y_i \mid x) = \mathbb{P}(Y_t = y_i \mid X_t = x) = \mathds{1}_{B_i}(x),
%\]
%where \( \mathds{1}_{B_i}(x) \) is the indicator function. In this model, the agent has access to only the representative points $Y_t = y_i$, which indicates that the true state $X_t$ resides in the quantization bin $B_i$. In other words, the true state $X_t$ remains hidden from the agent. In the quantized MDP, the agent observes $Y_t = q(X_t)$. 
%In a single sample path, the agent collects realizations of states, actions and stage-wise cost under the exploration policy:
%\begin{equation}
%X_0, U_0, c(X_0, U_0), X_1, U_1, c(X_1, U_1), \ldots
%\end{equation}

Now, consider the following Q-learning update rule for $(X_T,U_T)=(x,u) \in \mathbb{X} \times \mathbb{U}$:
\begin{equation}\label{eq:quantized_q_learning}
Q_{t+1}( q(x), u ) = (1 - \alpha_t( q(x), u )) Q_t( q(x), u ) + \alpha_t( q(x), u ) \left( c( x, u ) + \beta \min_{v \in \mathbb{U}} Q_t( q(X_{t+1}), v ) \right),
\end{equation}
where, $\alpha_t( y_t, u_t )$ is the learning rate and $c( x, u )$ is the immediate cost which depends on the true state $x$ and action $u$. The quantized Q-learning algorithm is then implemented as:
\begin{algorithm}
\caption{Quantized Q-Learning Algorithm}
\label{alg:quantized_q_learning}
\begin{algorithmic}
\STATE \textbf{Input:} Initial Q-function $Q_0$, quantizer $q: \mathbb{X} \to \mathbb{Y}$, exploration policy $\gamma^*$, total iterations $L$
\STATE Initialize counts $N(y, u) = 0$ for all $(y, u) \in \mathbb{Y} \times \mathbb{U}$
\FOR{$t = 0$ to $L - 1$}
    \STATE Observe the state $X_t$ and quantize the state according to $y_t = q( X_t )$
    \STATE Select action $u_t$ according to the exploration policy $\gamma^*$
    \STATE Execute action $u_t$, receive cost $c( X_t, u_t )$, observe next state $X_{t+1}$
    \STATE Observe next quantized state $y_{t+1} = q( X_{t+1} )$
    \STATE Update the count: $N( y_t, u_t ) \leftarrow N( y_t, u_t ) + 1$
    \STATE Update the learning rate:
    \begin{equation}
    \alpha_t( y_t, u_t ) = \frac{1}{1 + N( y_t, u_t ) }
    \end{equation}
    \STATE Update the Q-function:
    \begin{equation}\label{eq:q_update}
    Q_{t+1}( y_t, u_t ) = (1 - \alpha_t( y_t, u_t )) Q_t( y_t, u_t ) + \alpha_t( y_t, u_t ) \left( c( X_t, u_t ) + \beta \min_{v \in \mathbb{U}} Q_t( y_{t+1}, v ) \right)
    \end{equation}
\ENDFOR
\STATE \textbf{Output:} Learned Q-function $Q_L$
\end{algorithmic}
\end{algorithm}

\begin{assumption}\label{assumption:pomdp_convergence}
    \begin{enumerate}
    \item We define the step size \( \alpha_t(y, u) \) as follows:
    \[
    \alpha_t(y, u) = 
    \begin{cases} 
      0 & \text{if } (Y_t, U_t) \neq (y, u), \\
      \frac{1}{1 + \sum_{k=0}^{t} \mathds{1}\{Y_k = y, U_k = u\}} & \text{otherwise}.
    \end{cases}
    \]
    \item Under the exploration policy \( \gamma^* \), the state process \( \{X_t\}_{t \geq 0} \) is uniquely ergodic, implying the existence of a unique invariant measure \( \pi_{\gamma^*} \).
    \item During the exploration phase, each observation-action pair \( (y, u) \) is visited infinitely often. This ensures sufficient exploration of the state-action space.
\end{enumerate}
\end{assumption}

We note that a sufficient condition for the second item above is that the state process $\{ X_t \}_{t \geq 0}$ is positive Harris recurrent. We have the following convergence result:

\begin{theorem}[Theorem 9 \cite{kara2023qlearning}]\label{theorem:convergence_quantized_q_learning}
Under Assumption~\ref{assumption:pomdp_convergence}, the quantized Q-learning algorithm in \ref{alg:quantized_q_learning} converges almost surely to a function $Q^*( y, u )$ that satisfies the following fixed-point equation for every $(y,u) \in \mathbb{Y} \times \mathbb{U} $:
\begin{equation}\label{eq:quantized_q_fixed_point}
Q^*( y, u ) = C^*( y, u ) + \beta \sum_{ y' \in \mathbb{Y} } P^*( y' | y, u ) \min_{ v \in \mathbb{U} } Q^*( y', v ),
\end{equation}
where:
\begin{align}
C^*( y, u ) &= \frac{1}{ \pi_{\gamma^*}( B_y ) } \int_{ B_y } c( x, u ) \pi_{\gamma^*}( dx ), \label{eq:effective_cost_quantized} \\
P^*( y' | y, u ) &= \frac{1}{ \pi_{\gamma^*}( B_y ) } \int_{ B_y } \int_{ B_{ y' } } \mathcal{T}( x' | x, u ) \, dx' \, \pi_{\gamma^*}( dx ), \label{eq:effective_transition_quantized}
\end{align}
where, $B_y$ denotes the quantization bin corresponding to $y \in \mathbb{Y}$ and $\pi_{\gamma^*}$ is the invariant measure of the state process under the exploration policy $\gamma^*$:
\end{theorem}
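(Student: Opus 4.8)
The plan is to recognize that quantizing the state turns the problem non-Markovian: the observation process $\{Y_t\}$ with $Y_t=q(X_t)$ is not itself a Markov chain, so the classical Q-learning convergence theorems (which require a Markovian state) do not apply directly. Instead I would cast the recursion \eqref{eq:q_update} as an asynchronous stochastic approximation scheme on the finite space $\mathbb{Y}\times\mathbb{U}$ and analyze its limit through the ergodic properties of the \emph{hidden} state process $\{X_t\}$ guaranteed by Assumption \ref{assumption:pomdp_convergence}. Concretely, I would fix a pair $(y,u)$ and restrict attention to the a.s.\ infinite (by item 3) subsequence of times at which $(Y_t,U_t)=(y,u)$; on this subsequence the increment is driven by the target $c(X_t,u)+\beta\min_{v}Q_t(q(X_{t+1}),v)$, which depends on the hidden state $X_t$ inside $B_y$ whose conditional law varies along the trajectory.

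The crux is to average out this hidden randomness using unique ergodicity. Under $\gamma^*$ the process $\{X_t\}$ admits the unique invariant measure $\pi_{\gamma^*}$, and since the exploration policy selects actions as a function of the observed bin, the action variable does not bias the within-bin law of the state. Hence, by the ergodic theorem, the empirical conditional distribution of $X_t$ over the visits to $\{Y_t=y\}$ converges to the normalized restriction $\pi_{\gamma^*}(\cdot)/\pi_{\gamma^*}(B_y)$ on $B_y$, so the time-averaged target converges to
\begin{equation*}
\frac{1}{\pi_{\gamma^*}(B_y)}\int_{B_y}\Bigl(c(x,u)+\beta\sum_{y'}\mathcal{T}(B_{y'}\mid x,u)\min_v Q_t(y',v)\Bigr)\pi_{\gamma^*}(dx) = C^*(y,u)+\beta\sum_{y'}P^*(y'\mid y,u)\min_v Q_t(y',v),
\end{equation*}
for the effective cost and kernel in \eqref{eq:effective_cost_quantized}--\eqref{eq:effective_transition_quantized}. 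In other words, the averaged recursion is exactly the Q-learning iteration for the finite MDP $(\mathbb{Y},\mathbb{U},P^*,C^*)$.

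With this identification, I would invoke an asynchronous stochastic approximation convergence theorem (the martingale argument of Jaakkola--Jordan--Singh, or Tsitsiklis' asynchronous result). The step-size choice $\alpha_t(y,u)=1/(1+N(y,u))$ together with infinitely-often visiting (item 3) gives $\sum_t\alpha_t(y,u)=\infty$ and $\sum_t\alpha_t(y,u)^2<\infty$ along visits, while the operator $(FQ)(y,u)=C^*(y,u)+\beta\sum_{y'}P^*(y'\mid y,u)\min_v Q(y',v)$ is a $\beta$-contraction in the sup-norm because each $P^*(\cdot\mid y,u)$ is a genuine probability measure on $\mathbb{Y}$ (a $\pi_{\gamma^*}$-average of the stochastic kernel $\mathcal{T}$). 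Hence the iteration converges almost surely to the unique fixed point $Q^*$ of $F$, which is precisely \eqref{eq:quantized_q_fixed_point}.

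The main obstacle is making the ergodic-averaging step rigorous inside the stochastic-approximation framework. The one-step noise $\bigl(c(X_t,u)+\beta\min_vQ_t(q(X_{t+1}),v)\bigr)-\bigl(C^*(y,u)+\beta\sum_{y'}P^*(y'\mid y,u)\min_vQ_t(y',v)\bigr)$ is \emph{not} a martingale difference, since its conditional mean given the observation history is generally nonzero: the next observation depends on the hidden $X_t$ rather than on $y$ alone. The standard remedy is to decompose this term into a genuine martingale-difference part plus a bias, and to show the bias vanishes via a separation of timescales --- the step sizes decay slowly enough that the hidden chain mixes to $\pi_{\gamma^*}$ between effective updates, so the bias averages out. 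Controlling this coupling between the slowly varying iterate $Q_t$ and the mixing hidden chain is the delicate part, and is where the POMDP-specific analysis of \cite{kara2021convergence,kara2023qlearning} is required.
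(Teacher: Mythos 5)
This theorem is imported verbatim from \cite{kara2023qlearning} (it is stated as ``Theorem 9'' of that reference), and the present paper gives no proof of it; so there is no in-paper argument to compare against. Your sketch is a faithful reconstruction of the strategy used in the cited source: recognize that $\{q(X_t)\}$ is non-Markovian, view \eqref{eq:q_update} as an asynchronous stochastic approximation on the finite set $\mathbb{Y}\times\mathbb{U}$, use unique ergodicity of the hidden chain under $\gamma^*$ to show the empirical within-bin law of $X_t$ converges to $\pi_{\gamma^*}(\cdot)/\pi_{\gamma^*}(B_y)$ (this is exactly the role of \cite[Theorem 2.1]{karayukselNonMarkovian}, which the paper invokes for the limits \eqref{empCostEst} and \eqref{empTranEst}), identify the averaged recursion with Q-learning for the finite MDP $(\mathbb{Y},\mathbb{U},P^*,C^*)$, and conclude via the $\beta$-contraction of the resulting fixed-point operator. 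You also correctly flag the one genuinely delicate point: the one-step noise is not a martingale difference because the conditional law of $X_{t+1}$ depends on the hidden $X_t$ and not on $y$ alone, so the classical Jaakkola--Jordan--Singh/Tsitsiklis theorems do not apply off the shelf.

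What your outline leaves open is precisely that last step, and it is worth being explicit that the remedy is not quite the ``separation of timescales / mixing between updates'' heuristic you describe. The argument in \cite{kara2023qlearning} does not decompose the noise into a martingale difference plus a mixing-controlled bias; instead it constructs auxiliary comparison sequences (driven by the ergodic occupation of the joint process $(X_t,Y_t,U_t)$ along visit times) that sandwich the actual iterates and are shown to converge to the fixed point of $F$, with the coupling to the slowly varying $Q_t$ handled through the sup-norm contraction rather than through mixing-rate estimates. Since the step sizes $1/(1+N(y,u))$ decay at the Ces\`aro rate, a mixing-based bias argument would in general require quantitative mixing assumptions that Assumption~\ref{assumption:pomdp_convergence} does not provide (unique ergodicity alone gives no rate). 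So your plan is directionally correct but the final paragraph, as stated, would not close without replacing the timescale-separation step by the ergodic comparison argument of the cited work.
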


%\begin{proof}
%The proof follows from viewing quantized MDP as a POMDP and applying Theorem \ref{theorem:q_learning_convergence} directly.
%\end{proof}

\subsection{Empirical Model Learning and Equivalence with Quantized Q-Learning}

Let under the exploration policy $\gamma^*$ given in the quantized Q-learning algorithm in \ref{alg:quantized_q_learning} give rise to the invariant probability measure $\pi_{\gamma^*}$.
The limiting Q-function $Q^*( y, u )$ in the discussion above corresponds to the optimal Q-function of an approximate MDP defined over the quantized state space $\mathbb{Y}$. The effective cost $C^*( y, u )$ is the average cost over the bin $B_y$ weighted by the invariant distribution $\pi_{\gamma^*}$ conditioned on bin $B_y$:
\begin{equation}\label{modelApp}
C^*( y, u ) = \mathbb{E}_{ x \sim \pi_{\gamma^*} | x \in B_y } [ c( x, u ) ] = \int_{B_y} \frac{\pi_{\gamma^*}(dx)}{\pi_{\gamma^*}(B_y)}c(x,u) .
\end{equation}

Observe that the above is, see e.g. \cite[Theorem 2.1]{karayukselNonMarkovian}, equal to the almost sure limit of the empirical expression on the right hand side below:
\begin{equation}\label{empCostEst}
C^*( y, u )  = \lim_{N \to \infty} \frac{\sum_{k=0}^{N-1} c(X_k,U_k) 1_{\{X_k \in B_y,U_k=u\}}}{ \sum_{k=0}^{N-1} 1_{\{X_k \in B_y,U_k=u\}}} 
\end{equation}

Similarly, the effective transition probability $P^*( y' | y, u )$ represents the probability of transitioning from bin $B_y$ to bin $B_{ y' }$ under action $u$, averaged over the invariant distribution:
\begin{equation}\label{costApp}
P^*( y' | y, u ) = \mathbb{P}_{ x \sim \pi_{\gamma^*} | x \in B_y } [ q( X_{ t+1 } ) = y' | X_t = x, U_t = u ] =  \int_{B_y} \frac{\pi_{\gamma^*}(dx)}{\pi_{\gamma^*}(B_y)} {\cal T}(B_{y'} | x,u).
\end{equation}

Likewise, by \cite[Theorem 2.1]{karayukselNonMarkovian}, the above is the almost sure empirical limit of of the right hand side below:
\begin{equation}\label{empTranEst}
P^*( y' | y, u ) = \lim_{N \to \infty} \frac{\sum_{k=0}^{N-1} 1_{\{X_{k+1} \in B_{y'}\} } 1_{\{X_k \in B_y,U_k=u\}}}{ \sum_{k=0}^{N-1} 1_{\{X_k \in B_y,U_k=u\}}} 
\end{equation}

An interpretation of the above result then is that one can first obtain the approximate model given with (\ref{modelApp}-\ref{costApp}) by forcing the data into a Markovian model for both the empirical cost estimate (\ref{empCostEst}) and empirical transition kernel estimate (\ref{empTranEst}), and then solve the MDP as if this empirically constructed model is the actual one, instead of running Q-learning whose limit is then optimal precisely for this learned/empirically constructed model. A benefit of such a model-based approach is that one can have better sample complexity bounds compared with Q-learning for certain applications, see e.g. \cite{zhou2024robustness} (see also \cite[Section 5.1]{kara2020robustness}). 

\begin{remark} In the planning framework presented in the previous section, we had the flexibility to select the weighting measures \( \pi_{y_i} \) over the quantization bins arbitrarily. This allowed us to minimize the expected loss by choosing \( \pi_{y_i} \) as a Dirac measure centered at the median of each bin. In contrast, within the learning context, the weighting measure \( \pi_{y_i} \) is dictated by the exploration policy 
and is inherently dependent on the structure of the quantization bins \( \{ B_i \} \). %Specifically, \( \pi_{y_i} \) corresponds to the normalized invariant distribution over bin \( B_i \) induced by the exploration policy; that is, it is the conditional probability measure given that the state is in $B_i$: 
%\[ \pi_{y_i}(A) = \frac{1_{\{A \in B_i\}}\pi_{\gamma^*}( A )}{\pi_{\gamma^*}( B_y )}, \qquad A \in B_i, A \in {\cal B}(\mathbb{X})\]
%
\end{remark}

\subsection{Error Analysis and Quantizer Design for Model Learning and Quantized Q-Learning}

As noted above, the weighting measure depends on the exploration policy, quantizer and system model, and cannot be assigned arbitrarily. This dependence introduces a key difficulty in bounding the expected loss, as the weighing measure over the overflow bin, i.e. \( \pi_{y_{M+1}} \) can no longer be freely chosen or controlled. This limitation is significant for the analysis of non-compact spaces. %Nevertheless, we will show that it is still possible to establish a meaningful upper bound on the expected error by employing a Lyapunov-based analysis, analogous to the approach presented in Section~\ref{section:refined_discounted}.

In the previous section, we studied the approximate model where the weighting measures \( \hat{\pi}^*_{y_i} \) for each quantization bin were chosen freely and \( \hat{\pi}^*_{y_i} \) were chosen as Dirac measures concentrated at the \( \ell_1 \) centroids (medians) of each bin \( B_i \). However, when implementing Q-learning in the POMDP framework as described in this section, we lose the freedom to choose these weighting measures independently. The measures \( \hat{\pi}^*_{y_i} \)'s are inherently determined by the invariant distribution \( \pi_{\gamma^*} \) under the exploration policy \( \gamma^* \). We first start with the following result which relates the approximation error to occupation measures. 

\begin{theorem}\label{theorem:error_quantized_q_learning}
Under Assumption Assumption~\ref{assumption:lipschitz_cost_and_transitions} and Assumption \ref{assumption:pomdp_convergence}, given a collection of quantization bins \( \{B_i\}_{i=1}^M \), we have for any initial state $x_0$:
\begin{align}\label{refinedExpLearnL}
 \left| J^*_{\beta}(x_0) - \min_v \{ Q^*(x_0, v) \} \right|  \leq \left( \alpha_c + \frac{\beta \alpha_T \|c\|_{\infty}}{1 - \beta} \right) 
\sup_{\gamma_s \in \Gamma_s} \frac{1}{1 - \beta} \sum_{i=1}^{M} \int_{B_i} \int_{B_i} \|x - x'\|  \mu_{\beta}(dx) \hat{\pi}_{y_i}(dx'),
\end{align}
where, \( \Gamma_s \) represents the set of stationary policies, \( \mu_{\beta} \) is the discounted occupation measure under the policy \( \gamma_s \) and \( \hat{\pi}_{y_i}(dx') \) is the normalized invariant measure obtained under the exploration policy \( \gamma^* \).
\end{theorem}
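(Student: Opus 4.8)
The plan is to reduce the learning-error statement to the planning bound of Theorem~\ref{theorem:optimized_error_bound} by identifying the Q-learning limit with the value function of an appropriately weighted finite model. First I would observe that, by Theorem~\ref{theorem:convergence_quantized_q_learning}, the limiting Q-function $Q^*$ satisfies the fixed-point equation \eqref{eq:quantized_q_fixed_point} with cost $C^*$ and kernel $P^*$ built from the conditioned invariant measures $\hat{\pi}_{y_i} := \pi_{\gamma^*}(\cdot)/\pi_{\gamma^*}(B_i)$ on each $B_i$. Setting $W(y) := \min_{v\in\mathds{U}} Q^*(y,v)$ and taking the infimum over $u\in\mathds{U}$ on both sides of \eqref{eq:quantized_q_fixed_point} shows that $W$ solves exactly the finite-model dynamic programming equation defining $\hat{J}_{\beta}$ for these weighting measures; since $\beta<1$ the associated operator is a contraction with a unique fixed point, so $W=\hat{J}_{\beta}$. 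Extending both functions to $\mathds{X}$ through the quantizer $q$ then gives $\min_v Q^*(x_0,v)=\hat{J}_{\beta}(x_0)$, so the left-hand side of the claim is precisely the planning approximation error $|J^*_{\beta}(x_0)-\hat{J}_{\beta}(x_0)|$ for the finite model generated by $\pi_{\gamma^*}$.

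Next I would apply Theorem~\ref{theorem:optimized_error_bound} verbatim to this particular finite model. The only hypothesis to check is that the weighting measures $\hat{\pi}_{y_i}$ are well defined, i.e.\ $\pi_{\gamma^*}(B_i)>0$ for every $i$; this follows from the unique ergodicity together with the infinite-visitation requirement in Assumption~\ref{assumption:pomdp_convergence}, which guarantees that each bin carries positive invariant mass. Theorem~\ref{theorem:optimized_error_bound} then yields
\[
|J^*_{\beta}(x_0)-\hat{J}_{\beta}(x_0)| \le \left(\alpha_c+\frac{\beta\alpha_T\|c\|_{\infty}}{1-\beta}\right)\frac{1}{1-\beta}\sup_{\gamma_s\in\Gamma_s}\int_{\mathds{X}} L(x)\,\mu_{\beta}^{\gamma_s}(dx),
\]
where, crucially, $L$ is now the loss function \eqref{eq:loss_function} computed with $\hat{\pi}_{y_i}$ equal to the conditioned invariant measure, while $\mu_{\beta}^{\gamma_s}$ remains the discounted occupation measure of the true controlled process under the stationary policy $\gamma_s$.

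Finally I would unfold the definition of $L$. For $x\in B_i$ we have $L(x)=\int_{B_i}\|x-x'\|_1\,\hat{\pi}_{y_i}(dx')$, so splitting $\int_{\mathds{X}}=\sum_{i=1}^M\int_{B_i}$ and applying Fubini (both $\mu_{\beta}^{\gamma_s}$ and $\hat{\pi}_{y_i}$ being finite measures) turns the single integral into $\sum_{i=1}^M\int_{B_i}\int_{B_i}\|x-x'\|_1\,\mu_{\beta}^{\gamma_s}(dx)\,\hat{\pi}_{y_i}(dx')$, which is exactly the right-hand side of \eqref{refinedExpLearnL}. The main obstacle is conceptual rather than computational: it lies in the identification step, namely confirming that the Q-learning fixed point coincides with the value function of the planning finite model whose weighting is \emph{forced} to be $\pi_{\gamma^*}$, and hence that Theorem~\ref{theorem:optimized_error_bound} still applies even though the weighting measures are no longer at our disposal. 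Once this is in place the two measures in the double integral play genuinely different roles --- $\mu_{\beta}^{\gamma_s}$ arising from the controlled occupation measure and $\hat{\pi}_{y_i}$ from the exploration-induced invariant measure --- and no freedom remains to collapse $\hat{\pi}_{y_i}$ to a Dirac mass at the $\ell_1$ median as was done in Corollary~\ref{median_bound_disc}.
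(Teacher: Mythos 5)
Your proposal is correct and follows essentially the same route as the paper: the paper's proof likewise invokes Theorem~\ref{theorem:refined_error_bound} and then repeats the occupation-measure computation leading to \eqref{eq:expected_loss}, with the only change being that $\hat{\pi}_{y_i}$ is now forced to be the conditioned invariant measure $\pi_{\gamma^*}(\cdot)/\pi_{\gamma^*}(B_i)$. Your treatment is in fact slightly more complete, since you spell out the identification $\min_v Q^*(x_0,v)=\hat{J}_{\beta}(x_0)$ via the contraction argument on the fixed-point equation \eqref{eq:quantized_q_fixed_point} and verify $\pi_{\gamma^*}(B_i)>0$, both of which the paper leaves implicit.
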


\begin{proof}
%We begin by noting that, by (Kara and Yüksel, 2021, Theorem 4.1 \cite{kara2023convergence}), the Q-learning algorithm converges to the optimal Q-values, such that \( \min_v\{Q^*(x,v)\} = \hat{J}_\beta(x) \). Thus, we can use the optimal Q-function to obtain the value function for the approximate model.

From Theorem \ref{theorem:refined_error_bound}, we have the error bound:
\begin{equation}
\left| \hat{J}_{\beta}(x_0) - J^*_{\beta}(x_0) \right| \leq \left( \alpha_c + \frac{\beta \alpha_T \|c\|_{\infty}}{1 - \beta} \right) 
\sup_{\gamma_s \in \Gamma_s} \mathbb{E}_{x_0 }^{\gamma_s} \left[ \sum_{t=0}^{\infty} \beta^t L(X_t) \right],
\end{equation}
where \( \Gamma_s \) represents the stationary policies, and \( L(X_t) \) is defined in \eqref{eq:loss_function}. As in the analysis leading to \eqref{eq:expected_loss}, substituting the expected accumulated discounted loss function, the bound is refined to (\ref{refinedExpLearnL}), where \( \hat{\pi}_{y_i}(dx') \) is not freely chosen but obtained from the invariant probability measure under the exploration policy. 
\end{proof}

In the following, we obtain an explicit bound which demonstrates the applicability of quantized Q-learning for non-compact spaces for quantized Q-learning.

\begin{theorem}\label{theorem:lyapunov_learning}
    Assume that the state space $\mathds{X} \subseteq \mathbb{R}^n $ and let $b \geq 0$, $V:\mathds{X} \to[0,\infty)$, $f:\mathds{X} \to [\epsilon,\infty)$ for some $\epsilon>0$. Assume the state process $\{X_t\}$ satisfies the following condition:
\begin{equation}
\sup_{x\in\mathds{X},u\in\mathds{U}} \mathbb{E}[V(X_{t+1})|X_t=x,U_t=u] \leq V(x) - \alpha V(x) + b,\label{eq:lyapunov_eq_learning}
\end{equation}
where $V(x)=\|x\|_1^m$, $m>1$.
Then, under Assumption \ref{assumption:model} and Assumption \ref{assumption:lipschitz_cost_and_transitions}, provided that the cost function \( c \) is bounded, with $M$ denoting the number of bins, there exists a quantizer which leads to the following error bound for any initial state \( x_0 \in \mathbb{X}\):
\begin{equation}
\left| \hat{J}_{\beta}(x_0) - J^*_{\beta}(x_0) \right| \leq \left( \alpha_c + \frac{ \beta \alpha_T \| c \|_{\infty} }{ 1 - \beta } \right) \left(\frac{ 4 C^{1/m} }{ (M^{1/n( 1 - 1/m )}) (1-\beta) }\right),
\end{equation}
where $C$ is a constant, which depends on the initial state $x_0$, defined as: 
\[
C:= \frac{\|x_0\|_1^m(1-\beta)+b\beta}{1-\beta(1-\alpha)}.
\]
\end{theorem}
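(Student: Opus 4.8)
The plan is to run the learning analogue of the planning argument behind Theorem~\ref{theorem:uniform_quantizer_lyapunov}, starting now from the learning error representation of Theorem~\ref{theorem:error_quantized_q_learning} and carrying the extra term that appears because the per-bin weighting $\hat\pi_{y_i}$ is the (normalized) invariant measure of the exploration policy rather than a freely placed Dirac. Since $\min_v Q^*(q(x),v)=\hat J_\beta(x)$, Theorem~\ref{theorem:error_quantized_q_learning} already exposes the prefactor $(\alpha_c+\frac{\beta\alpha_T\|c\|_\infty}{1-\beta})\frac{1}{1-\beta}$, so it suffices to bound, uniformly over stationary policies $\gamma_s\in\Gamma_s$,
\[
  \sum_{i=1}^{M}\int_{B_i}\!\int_{B_i}\|x-x'\|_1\,\mu_{\beta}(dx)\,\hat\pi_{y_i}(dx')\ \le\ 4\,C^{1/m}\,M^{-\frac{1}{n}(1-\frac{1}{m})}.
\]

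First I would record the two moment estimates that the drift \eqref{eq:lyapunov_eq_learning} supplies. Iterating $\mathbb{E}[V(X_{t+1})\mid X_t,U_t]\le(1-\alpha)V(X_t)+b$ along any stationary policy and summing $\sum_t\beta^t\mathbb{E}_{x_0}^{\gamma_s}[V(X_t)]$ gives, exactly as in the proof of Theorem~\ref{theorem:uniform_quantizer_lyapunov}, the bound $\int_{\mathds{X}}\|x\|_1^m\,\mu_{\beta}(dx)\le C$ with the stated constant $C$. Because the drift is uniform in $u$, it also holds under the exploration policy $\gamma^*$; applying it to the invariant measure $\pi_{\gamma^*}$ and using stationarity yields $\int_{\mathds{X}}\|x\|_1^m\,\pi_{\gamma^*}(dx)\le b/\alpha$. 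Markov's inequality then gives the tail controls $\mu_{\beta}(\{\|x\|_1>R\})\le C/R^{m}$ and $\pi_{\gamma^*}(\{\|x\|_1>R\})\le (b/\alpha)/R^{m}$, which are the only probabilistic inputs needed.

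Next I would construct the quantizer explicitly. Partition the ball $\{\|x\|_1\le R\}$ into $M-1$ cells of $\ell_1$-diameter at most $\delta$ (a hypercubic partition gives $\delta$ of order $R/M^{1/n}$) and take a single overflow cell $B_o=\{\|x\|_1>R\}$. On each bounded cell both $x$ and $x'$ lie in the same set, so $\|x-x'\|_1\le\delta$, and these cells contribute at most $\delta\sum_i\mu_{\beta}(B_i)\le\delta$. For $B_o$ I would use $\|x-x'\|_1\le\|x\|_1+\|x'\|_1$, splitting its contribution into the clean piece $\int_{B_o}\|x\|_1\,\mu_{\beta}(dx)\le C/R^{m-1}$ and the cross piece $\mu_{\beta}(B_o)\int_{B_o}\|x'\|_1\,\hat\pi_{y_o}(dx')$. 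I would then choose $R\sim C^{1/m}M^{1/(nm)}$, so that $\delta$ and $C/R^{m-1}$ are both of order $C^{1/m}M^{-\frac1n(1-\frac1m)}$, and collect the resulting cube-count and overflow constants into the factor $4$ of the statement. The moment estimates and this cube-counting are routine.

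The hard part will be the cross piece $\mu_{\beta}(B_o)\int_{B_o}\|x'\|_1\,\hat\pi_{y_o}(dx')=\mu_{\beta}(B_o)\,\mathbb{E}_{\pi_{\gamma^*}}\!\big[\|X'\|_1\mid X'\in B_o\big]$, which is precisely the feature distinguishing learning from planning: in Theorem~\ref{theorem:uniform_quantizer_lyapunov} the representative is placed at the $\ell_1$-median and this quantity simply vanishes, whereas here $\hat\pi_{y_o}$ is the normalized invariant measure and cannot be collapsed to a point (cf.\ the remark on losing the freedom to choose $\hat\pi_{y_i}$). By Jensen, $\mathbb{E}_{\pi_{\gamma^*}}[\|X'\|_1\mid B_o]\le (b/\alpha)^{1/m}\pi_{\gamma^*}(B_o)^{-1/m}$, so after multiplying by $\mu_{\beta}(B_o)\le C/R^{m}$ one is left having to control the ratio of overflow masses $\mu_{\beta}(B_o)/\pi_{\gamma^*}(B_o)^{1/m}$ under \emph{two different policies}; moment bounds alone do not bound this ratio, since the normalization by $\pi_{\gamma^*}(B_o)$ can inflate the conditional tail mean. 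I expect the resolution to rest on the inward drift together with the ergodicity structure underlying Assumption~\ref{assumption:pomdp_convergence}: the drift forces $\pi_{\gamma^*}$ to put its overflow mass near the inner boundary $\{\|x\|_1=R\}$, so the conditional mean is effectively of order $R$, and one must argue that the two tail masses stay comparable so that the cross piece is also $O\!\big(C^{1/m}M^{-\frac1n(1-\frac1m)}\big)$. Making this single step rigorous, rather than the surrounding estimates, is where the real work lies.
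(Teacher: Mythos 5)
Your construction and the surrounding estimates coincide with the paper's: the same compact set $[-(Ck)^{1/m},(Ck)^{1/m}]^n$ with $k=M^{1/n}$ hypercubic cells per axis plus an overflow bin, the same supermartingale/iteration argument giving $\int\|x\|_1^m\,\mu_\beta(dx)\le C$ and (via the comparison theorem) $\int\|x\|_1^m\,\pi_{\gamma^*}(dx)\le b/\alpha$, the same Markov-inequality tail controls, and the same splitting of the overflow contribution into a ``clean'' piece and a ``cross'' piece. You have also correctly located the one step that is genuinely specific to the learning setting.

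The gap is that you do not actually prove the bound on the cross piece $\mu_\beta(B_{M+1})\,\mathbb{E}_{\hat\pi_{M+1}}[\|X'\|_1]$; you only conjecture that ``the drift forces $\pi_{\gamma^*}$ to put its overflow mass near the inner boundary'' and declare that making this rigorous is where the work lies. Since this term is the entire content distinguishing Theorem~\ref{theorem:lyapunov_learning} from Theorem~\ref{theorem:uniform_quantizer_lyapunov}, the proposal as written does not establish the theorem. For the record, the paper closes this step in two parallel ways (Step 3.a and 3.b of Appendix~\ref{proof:lyapunov_learning}). In 3.a it takes the initial distribution to be $\pi_{\gamma^*}$ itself, so that the discounted occupation measure coincides with the invariant measure; then the cross piece literally equals the clean piece $\mathbb{E}_{\mu_\beta^{\gamma_s}}[\|X\|_1\mathds{1}_{B_{M+1}}(X)]$ and is absorbed by the same H\"older estimate, at the price of changing the hypothesis on $x_0$ (which is why the factor is $4$ rather than $2n+1$: the two overflow terms each contribute one unit). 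In 3.b it writes $\mathbb{E}_{\hat\pi_{M+1}}[\|X'\|_1]\le(b/\alpha)^{1/m}\pi_{\gamma^*}(B_{M+1})^{-1/m}$ and then substitutes $\pi_{\gamma^*}(B_{M+1})\le (b/\alpha)/(Ck)$ into the \emph{denominator}; as you essentially observe, an upper bound on $\pi_{\gamma^*}(B_{M+1})$ yields a lower, not an upper, bound on $\pi_{\gamma^*}(B_{M+1})^{-1/m}$, so your worry about the ratio of overflow masses under two different measures is well founded and is not dispelled by moment bounds alone. If you want to complete your proof along the paper's lines, the clean route is 3.a: restrict to $x_0\sim\pi_{\gamma^*}$ (or to $\gamma_s=\gamma^*$) so the cross piece collapses; otherwise you need an additional argument supplying a matching lower bound on $\pi_{\gamma^*}(B_{M+1})$ or a direct bound on $\int_{B_{M+1}}\|x'\|_1\,\pi_{\gamma^*}(dx')/\pi_{\gamma^*}(B_{M+1})$ of order $(Ck)^{1/m}$, neither of which follows from the stated drift condition by itself.
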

\begin{proof}
The proof is in Appendix \ref{proof:lyapunov_learning}.
\end{proof}

\section{Concluding Remarks}

For MDPs with unbounded state spaces we presented upper bounds on finite model approximation errors via optimizing the quantizers used for finite model approximations. We also considered implications on quantized Q-learning and the performance of policies obtained via Q-learning where the quantized state is treated as the state itself. We noted the distinctions between planning, where approximating MDPs can be independently designed, and learning, where approximating MDPs are restricted to be defined by invariant measures of Markov chains under exploration policies, leading to significant subtleties on quantizer design performance. Nonetheless, asymptotic near optimality can be established under both setups with explicit convergence rates. %In particular, under Lyapunov growth conditions, we obtained explicit upper bounds which decay to zero as the number of bins approaches infinity. %It is currently of interest to obtain a learning algorithm which would learn the optimal quantizers.

We note that, due to relative clarity in presentation especially involving the associated Lyapunov analysis, while we studied the case with the state space being $\mathbb{R}^n$, the analysis can be directly generalized to any normed space by adopting the required regularity conditions on the kernels and cost. Notably, if one applies the analysis here to a filter-reduced MDP (known as belief-MDP) of Partially Observable Markov Decision Processes (POMDPs) (see \cite{tutorialkara2024partially} for conditions on the necessary continuity properties), by replacing $\|\cdot\|_1$ with the Wasserstein distance of order-1 on probability measures, the analysis can be applied identically.  
\appendix

\section{Proof of Theorem \ref{theorem:refined_error_bound}} \label{appendix:refined_error_bound}

\begin{proof}
We begin with the following initial bound, as in \cite{kara2023qlearning}, using the corresponding Bellman equations:
\begin{align}
\left| \hat{J}_{\beta}(x_0) - J^*_{\beta}(x_0) \right| \leq 
\left( \alpha_c + \beta \|\hat{J}_{\beta}\|_{\infty} \alpha_T \right) L(x_0) + \nonumber \\
\quad \beta \sup_{u_0 \in \mathbb{U}} 
\mathbb{E}^\gamma \left[ \left| J^*_{\beta}(X_1) - \hat{J}_{\beta}(X_1) \right| \big| x_0,u_0\right],
\end{align}
where \( \alpha_c \) is the Lipschitz constant for the cost function, \( \alpha_T \) is the Lipschitz constant for the transition kernel, and \( L(x_0) \) is the quantization error at \( x_0 \).

Let \( V(x) = |J^*_{\beta}(x) - \hat{J}_{\beta}(x)| \) represent the difference between the optimal value function and the quantized value function. Then we
have the following bound for \( V(x_0) \):
\begin{equation*}
V(x_0) \leq \left( \alpha_c + \beta \|\hat{J}_{\beta}\|_{\infty} \alpha_T \right) L(x_0) + \beta \sup_{u_0 \in \mathbb{U}} \mathbb{E} \left[ V(X_1) \mid x_0, u_0 \right].
\end{equation*}
One can show that the term $ \mathbb{E} \left[ V(X_1) \mid x_0, u_0 \right]$ when considered as a function of $x_0,u_0$ satisfies the measurable selection conditions under Assumption \ref{assumption:lipschitz_cost_and_transitions}. We denote by $f(x)$ the control function which achieves the supremum such that $$\sup_{u_0} \mathbb{E} \left[ V(X_1) \mid x_0, u_0 \right] = \mathbb{E} \left[ V(X_1) \mid x_0, f(x_0) \right]. $$

 Iterating the initial inequality for subsequent time step, we can write
\begin{align*}
V(x_0) &\leq \left( \alpha_c + \beta \|\hat{J}_{\beta}\|_{\infty} \alpha_T \right) \left( L(x_0) + \beta \mathbb{E}_{x_0} \left[ L(X_1) |x_0,f(x_0)\right] \right) \nonumber \\
&\quad + \beta^2 \mathbb{E}\left[\sup_{u_1}\mathbb{E} \left[ V(X_2)|X_1,u_1 \right]|x_0,f(x_0)\right]
\end{align*}
note that the supremum is achieved by the same control function $f$. Hence, defining the stationary policy \( \gamma_s = \{f, f, f, \dots \} \), and repeating this process up to time step \( T-1 \), we have:
\begin{equation*}
V(x_0) \leq \left( \alpha_c + \beta \|\hat{J}_{\beta}\|_{\infty} \alpha_T \right) \mathbb{E}^{\gamma_s}_{x_0} \left[ \sum_{t=0}^{T-1} \beta^t L(X_t) \right] + \beta^T \mathbb{E}^{\gamma_s}_{x_0} \left[ V(X_T) \right].
\end{equation*}
Since the cost function \( c \) is bounded, it follows that \( V(x) \) is bounded as well. Thus, \( \lim_{T \to \infty} \beta^T \mathbb{E}^{\gamma_s}_{x_0} \left[ V(X_T) \right] = 0 \), which means the second term vanishes as \( T \to \infty \). Taking the limit as \( T \to \infty \), we get:
\begin{equation*}
V(x_0) \leq \left( \alpha_c + \beta \|\hat{J}_{\beta}\|_{\infty} \alpha_T \right) \mathbb{E}^{\gamma_s}_{x_0} \left[ \sum_{t=0}^{\infty} \beta^t L(X_t) \right].
\end{equation*}
Finally, by taking the supremum over all stationary policies \( \gamma_s \), we obtain the upper bound:
\begin{equation*}
\left| \hat{J}_{\beta}(x_0) - J^*_{\beta}(x_0) \right| \leq  \left( \alpha_c + \beta \|\hat{J}_{\beta}\|_{\infty} \alpha_T \right) \sup_{\gamma_s \in \Gamma_s}\mathbb{E}^{\gamma_s}_{x_0} \left[ \sum_{t=0}^{\infty} \beta^t L(X_t) \right].
\end{equation*}
The proof is completed by noting that the term \( \|\hat{J}_{\beta}\|_{\infty} \) is bounded by \( \frac{\|c\|_{\infty}}{1 - \beta} \), due to the boundedness of the cost function \( c \).
\end{proof}

\section{Proof of Theorem \ref{theorem:optimized_error_bound}}
\label{appendix:optimized error bound}
\begin{proof}
The expected loss function \( \mathbb{E}^{\gamma_s}[L(X_t)] \) can be written as:
\begin{equation*}
\mathbb{E}^{\gamma_s}[L(X_t)] = \sum_{i=1}^{M} \int_{B_i} L(x) \mu_t(dx),
\end{equation*}
where \( \mu_t(dx) \) is the distribution of the state \( X_t \) at time \( t \) which can also be seen as the marginal of the strategic measure $\mathds{P}^{\gamma_s}_{x_0}$ over $X_t$, and the summation is over all quantization bins \( B_i \). 

Next, we substitute the definition of \( L(x) \) into the expectation:
\begin{equation*}
\mathbb{E}[L(X_t)] = \sum_{i=1}^{M} \int_{B_i} \left( \int_{B_i} \|x - x'\|_1 \hat{\pi}_{y_i}(dx') \right) \mu_t(dx).
\end{equation*}
Next, we interchange the order of the integrals, which is justified due to the non-negativity of the terms by the Fubini–Tonelli theorem:
\begin{equation*}
\mathbb{E}[L(X_t)] = \sum_{i=1}^{M} \int_{B_i} \int_{\mathds{X}}\mathds{1}_{B_i}(x) \|x - x'\|_1  \mu_t(dx) \hat{\pi}_{y_i}(dx')
\end{equation*}
where \( \mathds{1}_{B_i}(x) \) is the indicator function.

Now, we return to the full expression for the error bound in \eqref{eq:stationary_policy}. Then, \(\sum_{t=0}^{\infty} \beta^t \mathbb{E}[L(X_t)]\) becomes:
\begin{align}
\sum_{i=1}^{M} \int_{B_i} \left( \sum_{t=0}^{\infty} \beta^t \int_{\mathds{X}} \mathds{1}_{B_i}(x) \|x - x'\|_1 \mu_t^{\gamma_s}(dx) \right) \hat{\pi}_{y_i}(dx').
\label{eq:31}
\end{align}
For a fixed $x'$, define \( \tilde{c}(x) := \mathds{1}_{B_i}(x) \|x - x'\|_1 \):
\begin{align*}
\sum_{t=0}^{\infty} \beta^t \mathbb{E}^{\gamma_s}_{x_0} \left[ \tilde{c}(X_t) \right] &= \int_{\mathds{X}}\sum_{t=0}^{\infty} \beta^t \mathbb{E}^{\gamma_s}_{x_0} \left[ \tilde{c}(X_t) \mathds{1}_{\{ X_t \in dx \}} \right] = \int_{\mathds{X}}\tilde{c}(x)\sum_{t=0}^{\infty} \beta^t \mathbb{E}^{\gamma_s}_{x_0} \left[  \mathds{1}_{\{ X_t \in dx \}} \right] \nonumber \\
&= \int_{\mathds{X}}\tilde{c}(x)\sum_{t=0}^{\infty} \beta^t \mathds{P}^{\gamma_s}_{x_0} \left(X_t \in dx\ \right) = \int_{\mathds{X}}\tilde{c}(x)\nu^{\gamma_s}_{x_0}(dx) = \frac{1}{1 - \beta}\int_{\mathds{X}}\tilde{c}(x)\mu_{\beta}^{\gamma_s}(dx),
\end{align*}
where \(\mu_{\beta}^{\gamma_s}(A) := (1-{\beta})\nu^{\gamma_s}_{x_0}(A)\) for \( A \subseteq \mathds{X} \) and \(\nu^{\gamma_s}_{x_0} \) is the discounted occupation measure as we defined earlier.
We recognize this expression as a dot product between the cost function \( \tilde{c}(x) \) and the normalized occupation measure \( \mu_\beta^{\gamma_s}(dx) \), that is:
\begin{align*}
\langle \mu_\beta^{\gamma_s}, \tilde{c} \rangle = \int_{\mathds{X}} \tilde{c}(x) \mu_\beta^{\gamma_s}(dx),
\end{align*}
which leads to a linear program. Thus, the discounted sum can be expressed as:
\[
\sum_{t=0}^{\infty} \beta^t \mathbb{E}^{\gamma_s}_{x_0} \left[ \tilde{c}(X_t) \right] = \frac{1}{1 - \beta} \langle \mu_{\beta}^{\gamma_s}, \tilde{c} \rangle.
\]
The full expression then, by considering the distribution on the realizations for $x'$, for the discounted sum of the expected loss
function becomes:
\begin{align}\label{eq:expected_loss}
&\frac{1}{1 - \beta} \sum_{i=1}^{M} \int_{B_i} \int_{\mathds{X}} \|x - x'\|_1 \mathds{1}_{B_i}(x) \mu_{\beta}^{\gamma_s}(dx) \hat{\pi}_{y_i}(dx')\nonumber\\
&=\frac{1}{1 - \beta} \int_{\mathds{X}}  \sum_{i=1}^{M}  \mathds{1}_{B_i}(x) \int_{B_i}  \|x - x'\|_1 \hat{\pi}_{y_i}(dx')\mu_{\beta}^{\gamma_s}(dx) =\frac{1}{1 - \beta} \int_{\mathds{X}} L(x) \mu_{\beta}^{\gamma_s}(dx).
\end{align}

%Finally, we introduce the \( L_1 \) centroids, which are the medians of each quantization bin \( B_i \). In this case, the normalized weighting measure \( \hat{\pi}^*_{y_i}(dx') \) becomes a Dirac measure \( \delta_{y_i}(dx') \), concentrated at the centroid \( y_i \). This simplifies the expression significantly:
%\begin{equation}
%\frac{1}{1 - \beta} \sum_{i=1}^{M} \int_{B_i} \|x - y_i\|_1 \mu_{\beta}^{\gamma_s}(dx),
%\end{equation}

%where \( y_i \) is the \( L_1 \) centroid (median) of bin \( B_i \).

%In conclusion, the final refined upper bound for the quantization error becomes:
%\begin{equation}
%\sup_{\gamma_s \in \Gamma_s} \sum_{t=0}^{\infty} \beta^t \mathbb{E}^{\gamma_s}_{x_0} [L(X_t)] = \sup_{\gamma_s \in \Gamma_s} \frac{1}{1 - \beta} \sum_{i=1}^{M} \int_{B_i} \|x - y_i\|_1 \mu_{\beta}^{\gamma_s}(dx),
%\end{equation}
%where \( y_i \) is the \( L_1 \) centroid of each bin \( B_i \), and \( \mu_{\beta}^{\gamma_s}(dx) \) is the normalized discounted occupation measure. 
\end{proof}

\section{Proof of Theorem~\ref{theorem:uniform_quantizer_lyapunov}}
\label{appendix:uniform_quantizer_lyapunov}
\begin{proof}
First consider the case with $n=1$, that is $\mathds{X} \subseteq \mathbb{R}$
\textbf{Step 1:}
%Let $\mathds{X} \subseteq \mathbb{R}$ be the state space, which we p
Partition $\mathds{X}$ into $M+1$ quantization bins $\{ B_1, B_2, \ldots, B_{M}, B_{M+1} \}$. The first $M$ bins cover a compact subset $\mathcal{K} = [-\frac{N}{2},\frac{N}{2}] \subset \mathds{X}$, and the last bin $B_{M+1}$ is an overflow bin that captures the rest of the state space outside $\mathcal{K}$. We apply a uniform quantizer to the compact region $\mathcal{K}$, dividing it into $M$ bins of equal length. The quantization width of each bin is:
\begin{equation*}
\Delta = \frac{N}{M}.
\end{equation*}

\textbf{Step 2:}
For any state $x \in \mathcal{K}$, the quantization error $L(x)$ satisfies $L(x) \leq {\Delta}.$
We decompose the expected loss into two parts:
\begin{equation}
\mathbb{E}_{\mu_\beta^{\gamma_s}}[L(X)] = \int_{\mathcal{K}} L(x) \, \mu_\beta^{\gamma_s}(dx) + \int_{B_{M+1}} L(x) \, \mu_\beta^{\gamma_s}(dx),
\end{equation}
where $\mu_\beta$ is the normalized discounted occupation measure under a stationary policy $\gamma_s$. Since $L(x) \leq {\Delta}$ for $x \in \mathcal{K}$, we have:
\begin{equation}
\int_{\mathcal{K}} L(x)  \mu_\beta^{\gamma_s}(dx) \leq {\Delta} \cdot \mu_\beta^{\gamma_s}(\mathcal{K}) = {\Delta} \cdot \left( 1 - \mu_\beta^{\gamma_s}(B_{M+1}) \right).
\label{eq:compact_region_loss_learningg}
\end{equation}

\textbf{Step 3:}
In the overflow bin $B_{M+1}$, the state space may be unbounded, we take that the overflow bin is always mapped to state $x=0$: Thus, 
\begin{align}\label{overflow_control}
\int_{B_{M+1}} L(x) \, \mu_\beta^{\gamma_s}(dx) &= \int_{B_{M+1}} \int_{B_{M+1}}| x - x^{\prime} | \hat{\pi}_{M+1}(dx') \mu_\beta^{\gamma_s}(dx)  \nonumber\\ 
& \leq \int_{B_{M+1}}  \int_{B_{M+1}} \left( | x | + | x' | \right ) \hat{\pi}_{M+1}(dx^{\prime}) \mu_\beta^{\gamma_s}(dx) \nonumber\\
&= \mathbb{E}_{\mu_\beta^{\gamma_s}}[ | X | \mathds{1}_{B_{M+1}}(X)] ,
\end{align}
where $\hat{\pi}^*_{M+1}(dx')$ is the  normalized weighing measure over the bin $B_{M+1}$ and where the last step follows since we map the overflow bin directly to $0$, i.e. $x'=0$ with probability 1. Using H\"older's inequality to bound the expected loss over $B_{M+1}$:
\begin{equation}
\int_{B_{M+1}} | x | \, \mu_\beta^{\gamma_s}(dx) = \mathbb{E}_{\mu_\beta^{\gamma_s}}[ | X | \mathds{1}_{B_{M+1}}(X) ] \leq \left( \mathbb{E}_{\mu_\beta^{\gamma_s}}[ | X |^m ] \right)^{1/m} \mu_{\beta}^{\gamma_s}(B_{M+1})^{1 - 1/m}.
\label{eq:holder_application_learning}
\end{equation}

\textbf{Step 4:}
In the following, we bound the moment of the probability measure $\mu_{\beta}^{\gamma_s}$. 
{Define the process $\{M_t\}$ for $t \geq0$:
\[
M_t := \frac{V(x_t)}{(1 - \alpha)^t} - \sum_{k=0}^{t} \frac{b}{(1 - \alpha)^k},
\]
with $M_0 = V(x_0) - b = |x_0|^m-b.$ Observe the following with respect to the filtration $\{\mathcal{F}_t\}$, where $\mathcal{F}_t = \sigma(X_1,X_2,\dots,X_t)$ is the natural filtration:
\begin{align*}
 \mathbb{E}[M_{t+1} \mid \mathcal{F}_t] - M_t &= \mathbb{E}\left[\frac{V(X_{t+1})}{(1 - \alpha)^{t+1}} -\sum_{k=0}^{t+1} \frac{b}{(1 - \alpha)^k} \mid\mathcal{F}_t\right] - \frac{V(x_t)}{(1 - \alpha)^t} + \sum_{k=0}^{t} \frac{b}{(1 - \alpha)^k} \\
 & =  \frac{\mathbb{E}\left[V(X_{t+1})\mid\mathcal{F}_t\right]}{(1 - \alpha)^{t+1}} -\sum_{k=0}^{t+1} \frac{b}{(1 - \alpha)^k}  - \frac{V(x_t)}{(1 - \alpha)^t} + \sum_{k=0}^{t} \frac{b}{(1 - \alpha)^k}\\
 & \leq \frac{(1 - \alpha)V(x_t) }{(1 - \alpha)^{t+1}} - \frac{b}{(1 - \alpha)^{t+1}} -\frac{V(x_t)}{(1 - \alpha)^t} + \frac{b}{(1 - \alpha)^{t+1}} =0,
\end{align*}
where the inequality comes from the Lyapunov condition. Hence, we showed that the process $\{M_t\}$ is a supermartingale for all $t\geq0$. Then, observe that for every fixed stopping time $t$:
\begin{align*}
\mathbb{E}[M_t\mid\mathcal{F}_0] = \frac{\mathbb{E}\left[V(X_{t})\right]}{(1 - \alpha)^{t}} -\sum_{k=0}^{t} \frac{b}{(1 - \alpha)^k} \leq M_0 = V(x_0) - b.
\end{align*}
Rearranging the terms, we obtain:
\begin{align*}
\mathbb{E}[V(X_t)] &\leq (V(X_0) - b)(1 - \alpha)^t + (1 - \alpha)^t \sum_{k=0}^{t} \frac{b}{(1 - \alpha)^k}\\
&=  (V(X_0) - b)(1 - \alpha)^t + b \sum_{k=0}^{t} (1 - \alpha)^k = (V(X_0) - b)(1 - \alpha)^t + b \cdot \frac{1 - (1 - \alpha)^{t+1}}{\alpha}.
\end{align*}
Note that
\begin{align}
\mathbb{E}_{\mu_\beta^{\gamma_s}}[|X|^m]=\mathbb{E}_{\mu_\beta^{\gamma_s}}[V(X)] &= (1 - \beta) \sum_{t=0}^{\infty} \beta^t \mathbb{E}[V(X_t)] \nonumber \\
&\leq (1 - \beta) \sum_{t=0}^{\infty} \beta^t \left[ (V(x_0) - b)(1 - \alpha)^t + \frac{b}{\alpha}(1 - (1 - \alpha)^{t+1}) \right] \nonumber\\
&= (1 - \beta)(V(x_0) - b) \sum_{t=0}^{\infty} \left[\beta(1 - \alpha)\right]^t + \frac{b(1 - \beta)}{\alpha} \sum_{t=0}^{\infty} \beta^t \left[1 - (1 - \alpha)^{t+1}\right]\nonumber\\
& = \frac{(1 - \beta)(V(x_0) - b)}{1-\beta(1-\alpha)} + \frac{b(1-\beta)}{\alpha}\left[\frac{1}{1-\beta}-\frac{1-\alpha}{1-\beta(1-\alpha)} \right] \nonumber\\
&=\frac{V(x_0)(1-\beta)+b\beta}{1-\beta(1-\alpha)} =: C\label{eq:C},
\end{align}}

\textbf{Step 5:} Returning back to equation \eqref{eq:holder_application_learning}, we find:
\begin{align}
\int_{B_{M+1}} | x | \, \mu_\beta^{\gamma_s}(dx) = \mathbb{E}_{\mu_\beta^{\gamma_s}}[ | X | \mathds{1}_{B_{M+1}}(X) ] &\leq \left( \mathbb{E}_{\mu_\beta^{\gamma_s}}[ | X |^m ] \right )^{1/m} \mu_\beta^{\gamma_s}(B_{M+1})^{1 - 1/m}\nonumber\\
&\leq C^{1/m} \cdot \mu_\beta^{\gamma_s}(B_{M+1})^{1 - 1/m},
\label{eq:overflow_bin_loss}
\end{align}
where $C$ is defined in \eqref{eq:C}. Combining the bounds from \eqref{eq:compact_region_loss_learningg}, (\ref{overflow_control}) and \eqref{eq:overflow_bin_loss}:
\begin{equation}
\mathbb{E}_{\mu_\beta^{\gamma_s}}[ L(X) ] \leq {\Delta} \cdot \left( 1 - \mu_\beta^{\gamma_s}(B_{M+1}) \right) + C^{1/m} \cdot \mu_\beta^{\gamma_s}(B_{M+1})^{1 - 1/m} .
\label{eq:total_expected_loss_bound}
\end{equation}
%In particular, one can choose the weighing measure such that $\hat{\pi}^*_{M+1}(\cdot) = \mathds{1}_{\frac{N}{2}}(\cdot)$ which yields
%\begin{equation}
%\mathbb{E}_{\mu_\beta^{\gamma_s}}[ L(X) ] \leq \frac{\Delta}{2} \cdot \left( 1 - \mu_\beta^{\gamma_s}(B_{M+1}) \right) + C^{1/m} \cdot \mu_\beta^{\gamma_s}(B_{M+1})^{1 - 1/m} + \frac{N}{2} \cdot \mu_\beta^{\gamma_s}(B_{M+1}) .
%\label{eq:total_expected_loss_bound}
%\end{equation}

\textbf{Step 6:} In particular, if $N$ is chosen to be $N = 2(CM)^{\frac{1}{m}}$, then using Markov's inequality, we obtain:
\begin{equation}
\mu_\beta^{\gamma_s}(B_{M+1}) = \mathbb{P}( | X | \geq N/2 ) \leq \frac{ \mathbb{E}[ | X |^m ] }{ \left( (C M)^{1/m} \right)^m } \leq  \frac{ C }{ ( C M ) } = \frac{ 1 }{ M }.
\end{equation}
and 
\begin{equation*}
\Delta = \frac{ N }{ M  } = \frac{ 2 ( C M )^{1/m} }{ M } = \frac{ 2 C^{1/m} }{ M ^{ 1 - 1/m } }.
\end{equation*}
Using the bounds we obtained for the particular choice of $N$ in \eqref{eq:total_expected_loss_bound}, we get: 
\begin{align}
\mathbb{E}_{\mu_\beta^{\gamma_s}}[ L(X) ] \leq \frac{2 C^{1/m} }{ M ^{ 1 - 1/m } }  + C^{1/m} \cdot \left( \frac{1}{M} \right )^{1 - 1/m} 
\leq \frac{ 3 C^{1/m} }{ M^{  1 - 1/m  }}.
\end{align}
With $m > 1$, $1 - 1/m > 0$, and thus as $M \to \infty $, the total expected loss  $\mathbb{E}_{\mu_\beta^{\gamma_s}}[ L(X) ] \to 0$. By combining this bound with Theorem \ref{theorem:optimized_error_bound}, we obtain:
\begin{equation*}
\left| \hat{J}_{\beta}(x_0) - J^*_{\beta}(x_0) \right| \leq \left( \alpha_c + \frac{ \beta \alpha_T \| c \|_{\infty} }{ 1 - \beta } \right) \frac{ 3 C^{1/m} }{ (M^{ 1 - 1/m }) (1-\beta) },
\end{equation*}
where $M$ is the number of quantization bins, $m>1$ is the moment in the Lyapunov function, and $C$ is (defined in \eqref{eq:C}) the uniform bound on the $m$-th moment of $X_t$ with respect to measure $\mu_\beta^{\gamma_s}$.
%\end{proof}
%\section{Proof of Theorem~\ref{theorem:uniform_quantizer_lyapunov_nd}}
%\label{appendix:uniform_quantizer_lyapunov_n}
%\begin{proof}

This completes the case with $n=1$. We now consider the case where $\mathds{X}\subseteq\mathbb{R}^{n}$ with $n \geq 1$.

Now, fix a side length parameter $N>0$. Choose an integer $k\ge1$ and set 
\[
    M := k^{n}\qquad\bigl(\text{so that }k=M^{1/n}\bigr).
\]
Define the centered $n$–dimensional hyper-cube
\[
    \mathcal{K}:=
    \Bigl[-\frac{N}{2},\frac{N}{2}\Bigr]^{n}
    \subset\mathds{X}.
\]
Partitioning each dimension of $\mathcal{K}$ with k bins uniformly yields a quantization bin with width:
\[
    \Delta = \frac{N}{k} = \frac{N}{M^{1/n}} ,
\]
thus producing $M$ congruent hyper-cubic cells of volume
$\Delta^{\,n}=N^{n}/M$.  Index these cells by
\[
    B_{i_1,\dots,i_n}
    \;=\;
    \prod_{j=1}^{n}
          \Bigl[
            -\tfrac{N}{2}+(i_j-1)\Delta,\;
            -\tfrac{N}{2}+i_j\Delta
          \Bigr),
    \quad
    i_j\in\{1,\dots,k\},
\]
and enumerate them as $\{B_{1},\dots,B_{M}\}$. Set
\[
    B_{M+1}\;:=\mathds{X}\setminus\mathcal{K},
\]
which is the set of states outside the compact (granular) grid. We decompose the expected loss into two parts as in the proof of Theorem \ref{theorem:uniform_quantizer_lyapunov}:
\begin{equation}\label{eq:loss_decomp_nd}
    \mathbb{E}_{\mu_{\beta}^{\gamma_{s}}}[L(X)]
    = \int_{\mathcal{K}} L(x)\mu_{\beta}^{\gamma_{s}}(dx)
    +\int_{B_{M+1}} L(x)\mu_{\beta}^{\gamma_{s}}(dx),
\end{equation}
where
\(
  \mu_{\beta}^{\gamma_{s}}
\)
is the normalized discounted occupation measure under the stationary
policy $\gamma_{s}$.

Inside $\mathcal{K}$ the error is uniformly bounded, hence
\begin{equation}\label{eq:interior_loss_nd}
    \int_{\mathcal{K}}L(x)\mu_{\beta}^{\gamma_{s}}(dx)\leq
    {n\,\Delta\,\mu_{\beta}^{\gamma_{s}}\!(\mathcal{K})}
    = {n\,\Delta\bigl[1-\mu_{\beta}^{\gamma_{s}}(B_{M+1})\bigr]}.
\end{equation}

Inside the overflow bin \(B_{M+1}\subseteq\mathds{X}\), we take again that the overflow bin is directly mapped to state $x=0$ and thus, 
\begin{align*}
 &  \int_{B_{M+1}} L(x)\mu_{\beta}^{\gamma_{s}}(dx) = \int_{B_{M+1}} \int_{B_{M+1}} \|x-x'\|_{1} \hat\pi^{*}_{M+1}(dx') \mu_{\beta}^{\gamma_{s}}(dx) \\
  &\le\int_{B_{M+1}}\int_{B_{M+1}}\bigl(\|x\|_{1}+\|x'\|_{1}\bigr)\hat\pi^{*}_{M+1}(dx') \mu_{\beta}^{\gamma_{s}}(dx)  =\mathbb{E}_{\mu_{\beta}^{\gamma_{s}}}\bigl[\|X\|_{1}\mathbf{1}_{B_{M+1}}(X)\bigr]
  \end{align*}
  where the last step follows since we assume that $x'=0$ always for the overflow bin.
We apply Hölder’s inequality with exponent \(m>1\):
\begin{equation}\label{eq:holder_nd}
    \mathbb{E}_{\mu_{\beta}^{\gamma_{s}}}\bigl[\|X\|_{1}\mathbf{1}_{B_{M+1}}(X)\bigr]\le\Bigl(\mathbb{E}_{\mu_{\beta}^{\gamma_{s}}}\bigl[\|X\|_{1}^{m}\bigr]\Bigr)^{1/m}
    \mu_{\beta}^{\gamma_{s}}(B_{M+1})^{1-1/m}.
\end{equation}
Repeating the arguments in {\bf Step 4},
\[
\mathbb{E}_{\mu_\beta^{\gamma_s}}[\|X\|_1^m]\leq\frac{V(x_0)(1-\beta)+b\beta}{1-\beta(1-\alpha)} =: C
\]
Combining~\eqref{eq:interior_loss_nd} with the overflow estimate
obtained in~\eqref{eq:holder_nd}, we get:
\begin{equation}
    \mathbb{E}_{\mu_{\beta}^{\gamma_{s}}}[L(X)]
    \le{n\Delta
      \bigl[1-\mu_{\beta}^{\gamma_{s}}(B_{M+1})\bigr] }+C^{1/m}\mu_{\beta}^{\gamma_{s}}(B_{M+1})^{1-1/m}.
            \label{eq:total_loss_split_nd}
\end{equation}
We select
\(
    N = 2\bigl(C\,k\bigr)^{1/m}.
\)
By Markov's Inequality:
\begin{align}
    \mu_{\beta}^{\gamma_{s}}(B_{M+1})&={\mu_{\beta}^{\gamma_{s}}(\{x = {x_1,\cdots,x_n}: \min_{i=1,\cdots,n} |x_i| \geq N/2\})   \leq }  \Pr\bigl[\|X\|_{1}\ge N/2\bigr] \nonumber \\
&   \le \frac{\mathbb{E}_{\mu_{\beta}^{\gamma_{s}}}[\|X\|_{1}^{m}]}
    {(\tfrac{N}{2})^{m}}=\frac{C}{C\,k}=\frac{1}{k}. \nonumber 
\end{align}
With \(k=M^{1/n}\) interior cells per axis, the hyper-cube side length is
\[ \Delta =\frac{N}{k}=2C^{\frac{1}{m}}k^{(1-\frac{1}{m})}.
\]
%Additionally, we can choose the weighing measure such that $\hat\pi^{*}_{M+1}(\cdot)=\mathds{1}_{[N/2,\dots,N/2]}(\cdot)$ such that $\mathbb{E}_{\hat\pi^{*}_{M+1}}\bigl[\|X'\|_{1}\bigr] = \frac{n.N}{2} $.
Inserting these  bounds into~\eqref{eq:total_loss_split_nd}:
\begin{align*}
& \mathbb{E}_{\mu_{\beta}^{\gamma_{s}}}[L(X)]
      \le 2nC^{1/m}\left(\frac{1}{k}\right)^{1-\frac{1}{m}}+C^{1/m}\left(\frac{1}{k}\right)^{1-\frac{1}{m}} \le (2n+1)C^{1/m}k^{-(1-\frac{1}{m})}\end{align*}
 As $M \to \infty $, and so as $k\to\infty$, we have $\mathbb{E}_{\mu_\beta^{\gamma_s}}[ L(X) ] \to 0$. Thus,
\begin{equation}
\left| \hat{J}_{\beta}(x_0) - J^*_{\beta}(x_0) \right| \leq \left( \alpha_c + \frac{ \beta \alpha_T \| c \|_{\infty} }{ 1 - \beta } \right) \frac{ (2n+1) C^{1/m} }{ (M^{ \frac{1}{n}(1-\frac{1}{m}) }) (1-\beta) },
\end{equation}
where $n$ is the dimension of the state space, $M$ is the number of quantization bins, $m$ is the moment in the Lyapunov function,  and $C$ is (defined in \eqref{eq:C}) the uniform bound on the $m$-th moment of $X_t$ with respect to measure $\mu_\beta^{\gamma_s}$.
\end{proof}

\section{ Proof of Theorem \ref{theorem:error_bound_average_alt}} \label{appendix:error_bound_average_alt}

\begin{proof}
Using Assumption \ref{assumption:minorization}, the following Average Cost Optimality Equation (ACOE) is satisfied for the original model:
\begin{equation} \label{equation:acoe_minorized}
    h(x) = \inf_{u \in \mathds{U}} \left\{ c(x,u) + \int_{\mathds{X}} h(x_1) \mathcal{T}(dx_1 | x,u) \right\} - \int_{\mathds{X}} h(x) \mu(dx).
\end{equation}
Similarly, for the discretized model, we have the corresponding ACOE:
\begin{equation}
\hat{h}(y) = \inf_{u \in \mathds{U}} \left\{ C^*(y,u) + \sum_{y_1} \hat{h}(y_1) P^*(y_1 | y,u) \right\} - \int_{\mathds{X}} \hat{h}(q(x)) \mu(dx).
\end{equation}
According to Verification Theorem for average cost criterion (see \cite{hernandez1999further} and \cite{arapostathis1993discrete}), the long-term average cost \( J^*(x) \) for the original model and \( \hat{J}(x) \) for the discretized model are given by:
\[
J^*(x) = j^* = \int_{\mathds{X}} h(x) \mu(dx),
\]
\[
\hat{J}(x) = \hat{j} = \int_{\mathds{X}} \hat{h}(x) \mu(dx),
\]
for all \( x \in \mathds{X} \), where $h(\cdot)$ and $\hat{h}(\cdot)$ are the fixed-point solutions of the equations \eqref{equation:acoe_minorized} and \eqref{equation:acoe_minorized_approximate}, respectively.

For the rest of the proof, we simply use the same notation for $C^*$, and $\hat{h}$ by extending them as constant over the quantization bins, that is we override the notation and use $ \hat{h}(x)$ for $\hat{h}(q(x)) $ and $C^*(x,u)$ for $C^*(q(x),u)$. 

By defining a version of the finite model kernel $P^*$ that is defined over $\mathds{X}$ such that
\begin{align*}
\hat{\mathcal{T}}(\cdot|x,u):= \int_{B_i} \mathcal{T}(\cdot|x',u)\hat{\pi}_i(dx') \text{ for } x \in B_i,
\end{align*}
one can show that 
\begin{align}\label{equation:acoe_minorized_approximate}
\hat{h}(x) = \inf_{u \in \mathds{U}} \left\{ C^*(x,u) + \int \hat{h}(x_1) \hat{\mathcal{T}}(dx_1 | x,u) \right\} - \int_{\mathds{X}} \hat{h}(x) \mu(dx).
\end{align}

We denote by $\mathcal{T}^-(\cdot|x,u):=\mathcal{T}(\cdot|x,u)-\mu(\cdot)$ and $\hat{\mathcal{T}}^-(\cdot|x,u):=\hat{\mathcal{T}}(\cdot|x,u)-\mu(\cdot)$. Note that 
\begin{align*}
\|\mathcal{T}^-(\cdot|x,u) - \hat{\mathcal{T}}^-(\cdot|x,u)\|_{TV} = \|\mathcal{T}(\cdot|x,u) - \hat{\mathcal{T}}(\cdot|x,u)\|_{TV}. 
\end{align*}
We further denote by $V(x):=|h(x)-\hat{h}(x)|$. In what follows, the term $\sup_u \int V(x_1)\mathcal{T}(dx_1|x,u)$ will be of interest. We will denote the control function that achieves the supremum by $\gamma_s$, whose existence is guaranteed under Assumption \ref{assumption:lipschitz_cost_and_transitions}. Comparing the ACOEs for corresponding models, we can write that
\begin{align*}
V(x) &\leq \sup_{u\in \mathds{U}}\bigg| (c(x,u) - C^*(x,u)) +  \left(\int h(x_1)\mathcal{T}^-(dx_1|x,u) - \int \hat{h}(x_1)\hat{\mathcal{T}}^-(dx_1|x,u) \right)\bigg|\\
& \leq \alpha_c L(x) + \sup_{u\in\mathds{U}}\bigg| \int h(x_1)\mathcal{T}^-(dx_1|x,u) - \int \hat{h}(x_1)\mathcal{T}^-(dx_1|x,u) \bigg|\\
&\qquad\qquad + \sup_{u\in\mathds{U}}\bigg| \int \hat{h}(x_1)\mathcal{T}^-(dx_1|x,u) - \int \hat{h}(x_1)\hat{\mathcal{T}}^-(dx_1|x,u) \bigg|\\
&\leq \alpha_c L(x)  + \sup_{u\in\mathds{U}}\int V(x_1)\mathcal{T}^-(dx_1|x,u) + \|\hat{h}\|_\infty \alpha_T L(x)\\
&= (\alpha_c + \|h\|_\infty \alpha_T)L(x) + \int V(x_1)\mathcal{T}(dx_1|x,\gamma_s(x)) -\int V(x)\mu(dx).
\end{align*}
By repeating the same step, one can write that for any $T<\infty$:
\begin{align*}
V(x)\leq (\alpha_c + \|h\|_\infty \alpha_T) \sum_{t=0}^{T-1}E^{\gamma_s}\left[L(X_t)\right] - T \int V(x)\mu(dx).
\end{align*}
We can then write that 
\begin{align*}
\int V(x)\mu(dx) + \frac{V(x)}{T} \leq  (\alpha_c + \|h\|_\infty \alpha_T) \frac{1}{T}\sum_{t=0}^{T-1}E^{\gamma_s}\left[L(X_t)\right] .
\end{align*}
At the end of the proof, we will show that $h(x),\hat{h}(x)$ and thus $V(x)$ are uniformly bounded. Assuming this is true and sending $T\to\infty$, we get
\begin{align*}
\int V(x)\mu(dx) + \frac{V(x)}{T}& \leq  (\alpha_c + \|h\|_\infty \alpha_T) \limsup_{T\to\infty}\frac{1}{T}\sum_{t=0}^{T-1}E^{\gamma_s}\left[L(X_t)\right] \\
&= (\alpha_c + \|h\|_\infty \alpha_T) \int L(x) \pi_{\gamma_s}(dx),
\end{align*}
where $\pi_{\gamma_s}$ is the invariant measure induced by policy $\gamma_s$ which exists under Assumption \ref{assumption:minorization}.

To show the boundedness, we let $\hat{T}$ denote the ACOE operator for the finite model. Starting from $h_0(x)= 0$, let $h_k$ denote the function we obtain after applying the operator $\hat{T}$, $k$ consecutive times. We then have that 
\begin{align*}
h_k(x)\leq \|c\|_\infty + \sup_u\left\{\int h_{k-1}(x_1)\hat{\mathcal{T} }^-(dx_1|x,u) \right\}\leq \|c\|_\infty + \|h_{k-1}\|_\infty (1-\mu(\mathds{X})).
\end{align*}
Letting $\alpha:=(1-\mu(\mathds{X}))$, and repeating this step, we write $\|h_k\|_\infty \leq \|c\|_\infty \sum_{t=0}^{k-1} \alpha^t$. Finally, using the fact that the operator $\hat{T}$ is a contraction under the supremum norm under Assumption \ref{assumption:minorization} with the fixed point $\hat{h}$, we conclude $\|\hat{h}\|_\infty \leq \frac{\|c\|_\infty}{1-\alpha} = \frac{\|c\|_\infty}{\mu(\mathds{X})}$. Identical steps can be used to show that $\|h\|_\infty\leq \frac{\|c\|_\infty}{\mu(\mathds{X})}$. Combining what we have so far, we write 
\begin{align*}
|j^*-\hat{j}| = \left|\int h(x)\mu(dx) - \int \hat{h}(x)\mu(dx)\right| \leq \int V(x)\mu(dx) \leq \left(\alpha_c + \frac{\|c\|_\infty \alpha_T}{\mu(\mathds{X})}\right) \int L(x) \pi_{\gamma_s}(dx).
\end{align*}

\end{proof}

\section{Proof of  Theorem~\ref{theorem:uniform_quantizer_lyapunov_average}}
\label{appendix:uniform_quantizer_lyapunov_average}

\begin{proof}
 %   First we assume that the state space $\mathds{X} \subseteq \mathbb{R} $ and let $b \geq 0$, $V:\mathds{X} \to[0,\infty)$, $f:\mathds{X} \to [\epsilon,\infty)$ for some $\epsilon>0$. Assume the state process $\{X_t\}$ satisfies the following condition:
%\begin{equation}
%\sup_{x \in \mathds{X},u \in \mathds{U}} \mathbb{E}[V(X_{t+1})|X_t=x,U_t=u] \leq V(x) - f(x) + b,\label{eq:lyapunov_eq_avg}
%\end{equation}
%where $f(x)=|x|^m$, $m>1$.
Choose a collection of quantization bin $\{B_i\}_i^{M+1}$ such that the first $M$ bins quantize the compact set $[- ( b k )^{1/m}, ( b k )^{1/m}]^n$ uniformly where $M=k^n$ and the last bin $B_{M+1}$ is the overflow bin that captures the rest of the state space. Then, the total expected distortion becomes:
$$
\mathbb{E}_{\pi_{\gamma_s}}[L(X)] = \int_{\mathcal{K}} L(x) \, \pi_{\gamma_s}(dx) + \int_{B_{M+1}} L(x) \, \pi_{\gamma_s}(dx)
$$

%In the overflow bin $B_{M+1}$, the state space may be unbounded, and the quantization error $L(x) = \| x - y_{M+1} \|_1$ can be large. We need to bound:
%\begin{equation}
%\int_{B_{M+1}} L(x) \, \pi_{\gamma_s}(dx) = \int_{B_{M+1}} \| x - y_{M+1} \|_1 \, \pi_{\gamma_s}(dx),
%\end{equation}
%where $y_M$ is the representative point for bin $B_{M+1}$. 

As earlier, we choose $y_{M+1} = 0$ (or any fixed point), so that $L(x) = \| x \|_1$ for $x \in B_{M+1}$. By our assumption, there exists a Lyapunov function $V$ that satisfies inequality \eqref{eq:lyapunov_eq_avg}. Thus, by \cite[Theorem 4.2.5]{yuksel2020control} (which builds critically on the Comparison Theorem \cite[Theorem 14.2.2]{meyn1993markov}), under any invariant probability measure \( \pi \): $\int_\mathds{X} \|x\|_1^m \pi(dx)\leq b$

%Then, by Theorem~\ref{theorem:comparison_theorem}, for any initial state $x_0=x$, we have:
%\begin{equation*}
%\mathbb{E}_x\left[\sum_{t=0}^{T-1} f(x_t)\right] \leq V(x) + \mathbb{E}\left[\sum_{t=0}^{T-1} b\right] = V(x) + bT,
%\end{equation*}
%where $T$ is a deterministic stopping time. By taking the limit for both sides, we get:
%\begin{equation*}
%\limsup_{T \to \infty}\frac{1}{T}\mathbb{E}_x\left[\sum_{t=0}^{T-1} f(x_t)\right] \leq \limsup_{T \to \infty}\frac{1}{T} (V(x) + bT)=b ,
%\end{equation*}
%
%Now, consider any invariant probability measure \( \pi \). Recall that we assigned $f =|x|^m, m>1$. Fix \( N > 0 \) and define \( f_N(x) = \min(|x|^m, N) \). Then, we can write (see \cite[p. 75]{yuksel2024lecture}):
%\begin{align*}
%\int_\mathds{X} f_N(x) \pi(dx) &= \limsup_{T \to \infty} \frac{1}{T} \sum_{t=0}^{T-1} \mathbb{E}_{x_0\sim\pi}\left[E_{x_0}[f_N(X_t)]\right] \\
%&\leq \mathbb{E}_\pi \left[ \limsup_{T \to \infty} \frac{1}{T} \sum_{t=0}^{T-1} E_{x_0}[f_N(X_t)]\right] \leq b,
%\end{align*}
%where the first inequality comes from applying Fatou's Lemma since \( f_N(x) \) is bounded by \( N \). By Taking the limit $N \to \infty$ and noting that $ f_N(x) \to |x|^m $ pointwise and monotonically, by Monotone Convergence Theorem, we obtain $\int_\mathds{X} |x|^m \pi(dx)\leq b$.

%Finally, since this holds for any invariant probability measure $\pi$ and \( |x|^m \) is non-negative, we conclude that $    \mathbb{E}[ | X_t |^m ] \leq b,  \forall t \geq 0.$

By following the same procedure as in the proof of Theorem~\ref{theorem:uniform_quantizer_lyapunov}, we obtain the following bound:
\begin{align}
\mathbb{E}_{\pi_{\gamma_s}}[ L(X) ] &\leq \Delta \cdot \left( 1 - \pi_{\gamma_s}(B_{M+1}) \right) + b^{1/m} \cdot \pi_{\gamma_s}(B_{M+1})^{1 - 1/m} \\
& \leq\Delta \ + \frac{b^{1/m}}{k^{  1 - 1/m }} \leq \frac{ 2n b^{1/m} }{ k^{ 1 - 1/m } } + \frac{b^{1/m}}{k^{ 1 - 1/m}} = \frac{(2n+1) b^{1/m}}{k^{  1 - 1/m  }}.
\label{eq:total_expected_loss_bound_final_avg}
\end{align}
By combining this bound with Theorem \ref{theorem:error_bound_average_alt} under the given assumptions, we obtain:
\begin{equation}
\left| \hat{J}_{avg}(x_0) - J^*_{avg}(x_0) \right| \leq  \left(\alpha_c+ \frac{\alpha_T \|c\|_\infty}{\mu(\mathds{X})} \right) \frac{ (2n+1) b^{1/m} }{ (M^{1/n( 1 - 1/m )}) },
\end{equation}
where $M$ is the number of quantization bins, $m>1$ is the moment in Lyapunov function and $b$ is the uniform bound on the $m$-th moment of $X_t$. Thus, the average cost leads to sharper bounds via the Foster-Lyapunov analysis.
\end{proof}

\section{Proof of Theorem \ref{theorem:lyapunov_learning}}\label{proof:lyapunov_learning}

\begin{proof}

\textbf{Step1: } We partition the state space similar to the quantization in Theorem \ref{theorem:uniform_quantizer_lyapunov} by choosing the compact set $\mathcal{K}=[-(Ck)^{1/m},(Ck)^{1/m}]$ with $k^n=M$, ($k=M^{1/n}$). Then, for any state $x \in \mathcal{K}$, the quantization error $L(x)$ satisfies:
\begin{equation}
L(x) \leq {\Delta} = \frac{ 2n( C k )^{1/m} }{ k }=\frac{ 2nC^{1/m} }{ k ^{ 1 - 1/m } } = \frac{ 2nC^{1/m} }{ M ^{1/n (1 - 1/m) } }. 
\end{equation}

We decompose the total expected loss into two parts:
\begin{equation}
\mathbb{E}_{\mu_\beta^{\gamma_s}}[L(X)] = \int_{\mathcal{K}} L(x) \, \mu_\beta^{\gamma_s}(dx) + \int_{B_{M+1}} L(x) \, \mu_\beta^{\gamma_s}(dx),
\label{eq:expected_loss_decomposition_learning}
\end{equation}
where $\mu_\beta^{\gamma_s}$ is the normalized discounted occupation measure under a stationary policy $\gamma_s$.

\textbf{Step 2:} Since $L(x) \leq {\Delta}$ for $x \in \mathcal{K}$, we have:
\begin{equation}
\int_{\mathcal{K}} L(x) \, \mu_\beta^{\gamma_s}(dx) \leq {\Delta} \cdot \mu_\beta(\mathcal{K}) = \frac{ 2nC^{1/m} }{ M ^{1/n (1 - 1/m) } } 
\cdot \left( 1 - \mu_\beta^{\gamma_s}(B_{M+1}) \right).
\label{eq:compact_region_loss_learning}
\end{equation}

\textbf{Step 3 (Two parallel arguments):}  In the overflow bin $B_{M+1}$, the state space may be unbounded. Unlike the analysis in Section \ref{section:refined_discounted}, we cannot assign arbitrary weighting measure $\hat{\pi}_{M+1}$ for the overflow bin. Thus, the bound becomes:
\begin{align}
\int_{B_{M+1}} L(x) \, \mu_\beta^{\gamma_s}(dx) &= \int_{B_{M+1}} \int_{B_{M+1}}\| x - x^{\prime} \|_1 \hat{\pi}_{M+1}(dx') \mu_\beta^{\gamma_s}(dx)  \\ 
& \leq\int_{B_{M+1}}  \int_{B_{M+1}} \left( \| x \|_1 + \| x' \|_1 \right ) \hat{\pi}_{M+1}(dx^{\prime}) \mu_\beta^{\gamma_s}(dx) \\
&= \mathbb{E}_{\mu_\beta^{\gamma_s}}[ \| X \|_1 \mathds{1}_{B_{M+1}}(X)] + \mathbb{E}_{\hat{\pi}_{M+1}}[ \| X' \|_1] \cdot \mu_\beta^{\gamma_s}(B_{M+1}) ,
\end{align}
where $\hat{\pi}^*_{M+1}(dx')$ is the normalized invariant measure of the state process over the bin $B_{M+1}$ under the exploration policy. The second term is handled in two ways:

\paragraph{3.a}
Observe that if $x_0 \sim \pi_{\gamma^*}$ then $\mu_\beta^{\gamma_s} = \pi_{\gamma^*}$ and therefore the terms in the summation above will be identical, that is,
\[\mathbb{E}_{\hat{\pi}^*_{M+1}}[ \| X' \|_1] \cdot \mu_\beta^{\gamma_s}(B_{M+1})  =\mu_\beta^{\gamma_s}(B_{M+1}) \frac{\mathbb{E}_{\pi_{\gamma^*}}[ \| X \|_1\mathds{1}_{B_{M+1}}(X)]}{\pi_{\gamma^*}(B_{M+1})} = \mathbb{E}_{\mu_\beta^{\gamma_s}}[ \| X \|_1 \mathds{1}_{B_{M+1}}(X)] .\]

%In the proof of Theorem \ref{theorem:uniform_quantizer_lyapunov}, we showed that for a fixed initial state $x_0$: 
%\begin{equation*}
%  \mathbb{E}_{\mu_\beta^{\gamma_s}}[ | X | \mathds{1}_{B_{M+1}}(X)]  \leq  C^{1/m} \cdot \mu_\beta^{\gamma_s}(B_{M+1})^{1 - 1/m},
%\end{equation*}
%where 
%\[
%C:= \frac{|x_0|^m(1-\beta)+b\beta}{1-\beta(1-\alpha)}
%\]

Similar to the proof of Theorem \ref{theorem:uniform_quantizer_lyapunov}, by the law of the iterated expectations, we can show that when the initial state is not fixed but given by $x_0\sim\pi_{\gamma^*}$: 
\begin{equation*}
  \mathbb{E}_{\mu_\beta^{\gamma_s}}[ \| X \| \mathds{1}_{B_{M+1}}(X)] = \mathbb{E}_{\mu_\beta^{\gamma_s}}[ \mathbb{E}[ \| X \|_1 \mathds{1}_{B_{M+1}}(X)|X_0=x]]  \leq  \hat{C}^{1/m} \cdot \mu_\beta^{\gamma_s}(B_{M+1})^{1 - 1/m},
\end{equation*}
where 
\[
\hat{C}:= \frac{\mathbb{E}_{\mu_\beta^{\gamma_s}}[\|x_0\|_1]^m(1-\beta)+b\beta}{1-\beta(1-\alpha)}
\]

By using Markov's inequality, we obtain:
\begin{align}\label{eq:overflow_probability_bound}
    \mu_{\beta}^{\gamma_{s}}(B_{M+1})&={\mu_{\beta}^{\gamma_{s}}(\{x = {x_1,\cdots,x_n}: \min_{i=1,\cdots,n} |x_i| \geq (Ck)^{1/m}\})   \leq }  \Pr\bigl[\|X\|_{1}\ge (Ck)^{1/m}\bigr] \leq \frac{1}{k}  
\end{align}
which gives:
\begin{align}
\int_{B_{M+1}} L(x) \, \mu_\beta^{\gamma_s}(dx) 
&\le \frac{2\hat{C}^{1/m}}{k^{1-1/m}} = \frac{2\hat{C}^{1/m}}{M^{1/n(1-1/m)}} 
\end{align}

\paragraph{3.b}  We know that $\hat{\pi}_{M+1}$ is the normalized measure of the invariant distribution $\pi_{\gamma^*}$ under the exploration policy. Then, we have:
\begin{align*}
    \mathbb{E}_{\hat{\pi}_{M+1}}[ \| X \|_1] = \frac{\mathbb{E}_{\pi_{\gamma^*}}[ \| X \|_1\mathds{1}_{B_{M+1}}(X)]}{\pi_{\gamma^*}(B_{M+1})} \leq \frac{\mathbb{E}_{\pi_{\gamma^*}}[\|X\|^m]^{1/m} \cdot \pi_{\gamma^*}(B_{M+1})^{1-1/m}}{\pi_{\gamma^*}(B_{M+1})} = \frac{\mathbb{E}_{\pi_{\gamma^*}}[\|X\|^m]^{1/m}}{\pi_{\gamma^*}(B_{M+1})^{1/m}}
\end{align*} By \cite[Theorem 14.2.2]{meyn1993markov}, under the Lyapunov condition in \eqref{eq:lyapunov_eq_learning} we obtain:
\begin{align}
    \mathbb{E}_{\pi_{\gamma^*}}[\|X\|_1^m] \leq \frac{b}{\alpha}.
\end{align} 
For the overflow set  
\(B_{M+1}\) we have by Markov’s inequality:
\[
  \pi_{\gamma^{*}}(B_{M+1})\le
     \frac{\mathbb E_{\pi_{\gamma^{*}}}[\|X\|_1^{m}]}{(Ck)^{m/m}}
     \le\frac{b/\alpha}{Ck}.
\]
Putting the pieces together:
\begin{align*}
  \mathbb{E}_{\hat{\pi}_{M+1}}[\|X'\|_1]
  \mu_{\beta}^{\gamma_{s}}(B_{M+1}) \le \frac{\mathbb{E}_{\pi_{\gamma^*}}[\|X\|_1^m]^{1/m}}{\pi_{\gamma^*}(B_{M+1})^{1/m}} \cdot \frac{1}{k} =\Bigl(\tfrac{b}{\alpha}\Bigr)^{1/m}
     \Bigl(\tfrac{C}{b/\alpha}\Bigr)^{1/m}
     k^{-(1-1/m)} =\frac{C^{1/m}}{M^{1/n(1-1/m)}}
\end{align*}
and
\begin{align*}
\int_{B_{M+1}} L(x) \, \mu_\beta^{\gamma_s}(dx) 
&\le \frac{2C^{1/m}}{M^{1/n(1-1/m)}},
\end{align*}
which goes to zero as M goes to infinity. Thus the expected quantization error for the overflow bin vanishes as the number of quantization bin, M, increases. 

Either method gives the
same final error bound stated in Theorem~\ref{theorem:lyapunov_learning}. While the first method assumes that the initial state distribution is the same as $\pi_{\gamma^*}$, the second method assumes that the initial state can be any deterministic state in $\mathbb{X}$. 

\textbf{Step 4:} Now, combining the bounds we obtained so far, we get: 
\begin{align}
\mathbb{E}_{\mu_\beta^{\gamma_s}}[ L(X) ] &\leq \frac{  2C^{1/m} }{ M ^{1/n(1 - 1/m) } } \cdot \left( 1 - \mu_\beta^{\gamma_s}(B_{M+1}) \right)  + \frac{2C^{1/m}}{M^{1/n(1-1/m)}}  \nonumber\\
&\leq \frac{  2C^{1/m} }{ M^{1/n( 1 - 1/m) } } + \frac{2C^{1/m}}{M^{1/n( 1 - 1/m)}} = \frac{4 C^{1/m}}{M^{1/n(  1 - 1/m)  }}.
\label{eq:total_expected_loss_bound_final}
\end{align}
Obviously, $m > 1$ implies  $1 - 1/m > 0$.

\textbf{Step 5:} Let By combining this bound with Theorem \ref{theorem:optimized_error_bound} under the necessary assumptions, we obtain:
\begin{equation}
\left| \hat{J}_{\beta}(x_0) - J^*_{\beta}(x_0) \right| \leq \left( \alpha_c + \frac{ \beta \alpha_T \| c \|_{\infty} }{ 1 - \beta } \right) \left(\frac{ 4 C^{1/m} }{ (M^{ 1/n(1 - 1/m) }) (1-\beta) }\right),
\end{equation}
where $M$ is the number of quantization bins, $m>1$ is the moment in Lyapunov function, $C$ is the uniform bound on the $m$-th moment of $X_t$ with respect to measure $\mu_\beta$. 
\end{proof}

\bibliographystyle{plain} % 
\bibliography{SerdarBibliography,OsmanBibliography,AliBibliography} 

\end{document}